\numberwithin{equation}{section}
\newtheorem{theorem}{Theorem}
\newtheorem{lemma}[theorem]{Lemma}
\newtheorem{example}[theorem]{Example}
\newtheorem{condition}[theorem]{Condition}
\numberwithin{theorem}{section}
\renewcommand{\cal}[1]{\mathcal{#1}}
\renewcommand{\r}{\mathbb{R}}
\newcommand{\rn}{\mathbb{R}^n}
\newcommand{\rnn}{\mathbb{R}^{n\times n}}
\newcommand{\rnm}{\mathbb{R}^{n\times m}}
\newcommand{\n}{\mathbb{N}}
\newcommand{\z}{\mathbb{Z}}
\newcommand{\nn}{\mathbb{N}^n}
\newcommand{\norm}[1]{\left|\left|{#1}\right|\right|}
\newcommand{\Pb}{\mathbb{P}}
\newcommand{\Eb}{\mathbb{E}}
\newcommand{\Ebb}[1]{\mathbb{E}\left[#1\right]}
\newcommand{\Pbb}[1]{\mathbb{P}\left(#1\right)}
\newcommand{\pol}{\r[x]}
\newcommand{\nnd}{\nn_d}
\newcommand{\rrd}{\r^{r(d)}}
\newcommand{\rrdrd}{\r^{r(d)\times r(d)}}
\newcommand{\iprod}[2]{\left\langle #1,#2\right\rangle}
\newcommand{\mmag}[1]{\left|#1\right|}
\titleformat{\subsection}[runin]
  {\normalfont\normalsize\bfseries}{\thesubsection}{1em}{}
  \titleformat{\section}[runin]
  {\normalfont\large\bfseries}{\thesection}{1em}{}
\title{Bounding stationary averages of polynomial diffusions via semidefinite programming}
\author{
  Juan Kuntz\footnote{Department of Mathematics and Department of Bioengineering, Imperial College London, London, SW7 2AZ,UK (corresponding author: jk208@ic.ac.uk).}
  \and
  Michela Ottobre\footnote{Mathematics Department, Heriot Watt University, Edinburgh, EH14 4AS, UK m.ottobre@hw.ac.uk).}
  \and
  Guy-Bart Stan\footnote{Department of Bioengineering, Imperial College London, London, SW7 2AZ, UK     (g.stan@ic.ac.uk).}
  \and
  Mauricio Barahona\footnote{Department of Mathematics, Imperial College London, London, SW7 2AZ, UK  (corresponding author: m.barahona@ic.ac.uk).}
}
\begin{document}

\maketitle
\begin{abstract}
We introduce an algorithm based on semidefinite programming that yields increasing (resp. decreasing) sequences of lower (resp. upper) bounds on polynomial stationary averages of diffusions with polynomial drift vector and diffusion coefficients. The bounds are obtained by optimising an objective, determined by the stationary average of interest, over the set of real vectors defined by certain linear equalities and semidefinite inequalities which are satisfied by the moments of any stationary measure of the diffusion. We exemplify the use of the approach through several applications: a Bayesian inference problem; the computation of Lyapunov exponents of linear ordinary differential equations perturbed by multiplicative white noise; and a reliability problem from structural mechanics. Additionally, we prove that the bounds converge to the infimum and supremum of the set of stationary averages for certain SDEs associated with the computation of the Lyapunov exponents, and we provide numerical evidence of convergence in more general settings.
\end{abstract}

\section{Introduction.}
Stochastic differential equations (SDEs) and the diffusion processes they generate are important modelling tools in numerous scientific fields, such as chemistry, economics, physics, biology, finance, and epidemiology \cite{Kloeden2011}. Stationary measures are to SDEs what fixed points are to deterministic systems: if the SDE is stable, then its stationary measures determine its long-term statistics. More concretely, both the ensemble averages and the time averages of the process converge to averages with respect to a stationary measure (which we call stationary averages). 
For the majority of SDEs, there are no known analytical expressions for their stationary measures. Consequently, large efforts have been directed at developing computational tools that estimate stationary averages and, more generally, at developing tools that can be used to study the long-term behaviour of SDEs. Among these, most prominent are Monte Carlo discretisation schemes, PDE methods  (finite-difference, finite-element,  Galerkin methods),  path integral methods, moment closure methods, and linearisation techniques (see, e.g., \cite{Schueller1997,Dunne1997,Talay1990,Talay2002}). 

The purpose of this paper is to introduce a new algorithm that provides an alternative approach to the analysis of stationary measures. It uses semidefinite programming to compute hard bounds on polynomial stationary averages of polynomial diffusions. Our approach has distinct advantages: (i) it returns monotone sequences of both upper and lower bounds on the stationary averages, hence quantifying precisely the uncertainty in our knowledge of the stationary averages;  (ii) no assumptions are required on the uniqueness of the stationary measures of the SDE; and (iii) the availability of high quality SDP solvers and of the modelling package GloptiPoly 3 drastically reduces the investment of effort and specialised knowledge required to implement the algorithm for the analysis of a given diffusion process.

\subsection{Problem definition.}
\label{psu} 

We consider $\rn$-valued diffusion processes $X:=\{X_t:t\geq0\}$ satisfying stochastic differential equations of the form
\begin{equation}
\label{eq:sde}
dX_t=b(X_t) \, dt+\sigma(X_t) \, dW_t,\qquad X_0=Z, 
\end{equation}
where the entries of the drift vector $b:\rn\to\rn$ and the diffusion coefficients $\sigma:\rn\to\rn\times\mathbb{R}^m$ are \emph{polynomials}. In the above, $W:=\{W_t:t\geq0\}$ is a standard $\mathbb{R}^m$-valued Brownian motion and the initial condition $Z$ is a Borel measurable random variable. We assume that the underlying filtered space $(\Omega,\cal{F},\{\cal{F}_t\}_{t\geq0},\Pb)$ satisfies the usual conditions.

A Borel probability measure $\pi$ on $\rn$ is a {\em stationary} (or {\em invariant}) {\em measure} of the dynamics \eqref{eq:sde} if 
$$
Z \sim \pi  \quad \Rightarrow \quad  X_t \sim \pi \,\,\, \mbox{for all }t\geq 0,
$$
where we use the notation $Y \sim \pi$ to mean that the random variable $Y$ has law $\pi$. The set of stationary measures of \eqref{eq:sde} is denoted $\mathcal{P}$.  

The problem we address here is how to estimate stationary averages of the form
\begin{equation}
\label{eq:stain}
\pi(f):=\int f(x) \, \pi(dx)
\end{equation}
in a systematic, computationally efficient manner. We present an algorithm that yields bounds on averages~\eqref{eq:stain} when $f$ is a \emph{polynomial}. Hence,  our algorithm can be used to bound the \emph{moments} of  the stationary measures of \eqref{eq:sde}. 

More precisely, the algorithm returns lower and upper bounds on the set  
\begin{equation}
\label{eq:set2}
B_{f,\cal{G}}^d:=\left\{\pi(f):\pi\in\mathcal{P}\text{ has finite }d\text{-th order moments and support contained in }\cal{G}\right\},
\end{equation}
where $\cal{G}$ is a given real algebraic variety
\begin{equation}\label{eq:mani}\cal{G}:=\{x\in\rn: \, g_1(x)= 0,\dots,g_\ell(x)= 0\},\end{equation}
for given polynomials $g_1,\dots,g_\ell$. The case $\cal{G}=\rn$ corresponds to $\ell=0$. 

Stationary averages of the form \eqref{eq:stain} and the set \eqref{eq:set2} are of broad interest: 
if $X$ enjoys some mild stability and regularity properties \cite{Meyn1993a}, the stationary averages~\eqref{eq:stain} provide succinct, quantitative information about the \emph{long-term} behaviour of $X$. In particular, for almost every sample path $t\mapsto X_t(\omega)$ (that is, $\Pb$-almost every $\omega \in \Omega$), there exists a $\pi\in\mathcal{P}$ such that
\begin{equation}
\label{eq:conv}
\lim_{t\to\infty}\frac{1}{t}\int_0^tf(X_s(\omega))ds=\int f(x) \, \pi(dx),
\end{equation}
see Theorem~\ref{drift} for a formal statement. Furthermore, for appropriately chosen $d$, the set $B^d_{f,\cal{G}}$ is the set of limits~\eqref{eq:conv} where $t\mapsto X_t(\omega)$ is any sample path contained in $\cal{G}$:
\begin{equation}\label{eq:bfdglim}B_{f,\cal{G}}^d=\left\{\lim_{t\to\infty}\frac{1}{t}\int_0^tf(X_s(\omega))ds:\omega\in\Omega_\cal{G}\right\},\end{equation}
where $\Omega_\cal{G}$ is the subset of samples $\omega\in\Omega$ such that $X_t(\omega)$ belongs to $\cal{G}$ for all $t\geq0$, see \cite[\S 8]{Meyn1993a}. Similar considerations also apply to the ensemble averages $\Ebb{f(X_t)}$  (see the remark after Theorem~\ref{drift}). Therefore, bounds on the set $B^d_{f,\cal{G}}$ equip us with quantitative information on the long-term behaviour of the paths of $X$ that are contained in $\cal{G}$. 
\subsection*{Remark (Linking the long-term behaviour to the stationary measures):} Although our work is motivated by the study of the long-term behaviour of a diffusion $X$, the scope of this paper is restricted to the problem of bounding the stationary averages~\eqref{eq:stain} and the associated set~\eqref{eq:set2}. 
It is important to remark that to connect the long-term behaviour of $X$ with 
the set $B^d_{f,\cal{G}}$ (and the bounds our algorithm returns) it is necessary to establish separately:
\begin{itemize}
\item[(a)] the existence of stationary measures of~\eqref{eq:sde} with support contained in $\cal{G}$ and the finiteness of their moments up to order $d$;
\item[(b)] the convergence of the time averages, i.e. verify that the limit \eqref{eq:conv} holds for the diffusion at hand;  
\item[(c)] that, for the initial conditions of interest, $X$ takes values in $\cal{G}$. 
\end{itemize}
%
A straightforward way to verify (c) is to apply It\^o's formula and check that
$$dg_1(X_t)= dg_2(X_t) = \dots = dg_\ell(X_t) = 0.$$
If the initial condition $Z$ takes values in $\cal{G}$, then $X$ clearly takes values in $\cal{G}$ as well. Establishing (a) and (b) typically requires additional proofs beyond the scope of this paper, and has been studied extensively elsewhere (see \cite{Meyn1993a,Meyn1993b,Khasminskii2012}). 
We point out that whether or not conditions (a)-(c) hold, the algorithm still yields bounds 
on $B^d_{f,\cal{G}}$. Without proving (a)-(c), it may simply be that the set~\eqref{eq:set2} is empty, 
or that the relationships \eqref{eq:conv}  or \eqref{eq:bfdglim} do not exist.
To make the paper self-contained, we recall briefly in Section~\ref{stability} some simple conditions that we use in our later examples to establish (a) and (b). 
\subsection{Brief description of algorithm.}
\label{sec:ba}
Mathematically, our approach consists of four steps:
\begin{enumerate}
\item[(1)] We derive a finite set of linear equalities satisfied by the moments of any stationary measure of~\eqref{eq:sde} (see Lemma \ref{diff} and Section \ref{lleqs}). Such a system of equalities is often underdetermined (see Example \ref{x3}) and therefore admits infinitely many solutions.  
\item[(2)] \label{item2}
To rule out spurious solutions, we exploit the fact that the solutions must be the moments of a probability measure, hence must satisfy extra constraints (e.g. even moments cannot be negative). We use well known results~\cite{Lasserre2009} to construct semidefinite inequalities (so called \emph{moment constraints}) that are satisfied by moments of any probability measure with support contained in $\cal{G}$. This is done in Section \ref{sdineqs}.
\item[(3)] Exploiting the fact that $f$ is a polynomial, we rewrite the stationary average $\pi(f)$ as a linear combination of the moments of $\pi$.
\item[(4)] By maximising (resp. minimising) the linear combination over the set of all vectors of real numbers that satisfy \emph{both} the linear equalities and the semidefinite inequalities, we obtain an upper (resp. lower) bound 
on the set $B_{f,\cal{G}}^d$.
\end{enumerate}

Computationally, to find the upper (or lower) bound we solve a semidefinite program, a particularly tractable class of convex optimisation problems for which high-quality solvers are freely available online. The semidefinite constraints in (2), popularised by Lasserre and co-authors (see \cite{Lasserre2009} and references therein), can be implemented via the freely-available, user-friendly package GloptiPoly~3 \cite{Henrion2009}, which makes the approach described here accessible to non-experts. 

\subsection*{Remark (Convergence of the algorithm):} Concerning moment approaches, a question that often arises is that of convergence of the algorithm. In the context of this paper, this question takes the following form.
Suppose that we want to obtain bounds on the set
$$B^{\infty}_{f,\cal{G}}:=\bigcap_{d=d_f}^\infty B^d_{f,\cal{G}},$$
where $d_f$ is the degree of $f$. Note that if $\cal{G}$ is compact, then any measure with support on $\mathcal{G}$ has all moments finite, thus $B^{d_f}_{f,\cal{G}}=B^{d_f+1}_{f,\cal{G}}=\dots=B^\infty_{f,\cal{G}}$; hence $B^\infty_{f,\cal{G}}$ is the only set of interest. As will become clear later, repeated applications of our algorithm yield both an increasing sequence of lower bounds and a decreasing sequence of upper bounds on $B^\infty_{f,\cal{G}}$.  The algorithm is said to converge if these sequences converge to the infimum and supremum, respectively, of $B^\infty_{f,\cal{G}}$. In general, the algorithm presented in this paper is not guaranteed to converge in the above sense. 
However, our numerics indicate that the algorithm often converges in practice (see Examples \ref{x32}, \ref{pest} and \ref{stabuns}). In Section \ref{sec42}, we do prove convergence for SDEs related to the Lyapunov exponents of linear ordinary differential equations (ODEs) perturbed by multiplicative white noise. The question of convergence is of theoretical interest, but regardless of its answer, the bounds computed are still valid and are often still appropriate for the application in question (e.g., see the examples in Section~\ref{applications}).
\subsection{Related literature and contributions of the paper.}\label{merits} 
Computational methods that yield bounds on functionals of Markov processes by combining linear equalities (arising from the definitions of the functional and the process) and moment constraints have appeared in the literature. We refer to this class of methods as \emph{generalised moment approaches} (GMAs). The various GMAs differ in the type of Markov processes and moment constraints they consider. The ideas underlying GMAs were first discussed in~\cite{Dawson1980,Dawson1980a} where they were used to obtain analytical bounds on moments for measure-valued diffusions. The first GMA was presented in E. Schwerer's PhD thesis \cite{Schwerer1996} in the context of jump processes with bounded generators and reflected Brownian motion on the non-negative orthant.  In~\cite[\S 12.4]{Hernandez-Lerma2003}, the authors present a GMA that yields bounds on the moments of stationary measures of discrete-time Feller Markov chains. In~\cite{Helmes2003}, analogous techniques are used to bound the moments of the stationary measures of diffusion approximations of the Wright-Fisher model on the unit simplex. GMAs have also been proposed to solve optimal control problems \cite{Hernandez-Lerma1999,Helmes2000}, to estimate exit times \cite{Helmes2001,Lasserre2004}, and to price financial derivatives \cite{Lasserre2006,Eriksson2011}. 

The contributions of this paper are as follows. Whereas~\cite{Schwerer1996,Helmes2003,Helmes2000} consider only stationary averages of specific SDEs, we introduce GMAs to the setting of general polynomial diffusions over unbounded domains---this requires setting up the technical background of Lemma~\ref{eqs}. Also in contrast with~~\cite{Schwerer1996,Helmes2003,Helmes2000}, we employ moment constraints that lead to SDPs instead of linear programs. In Section \ref{sec42}, we prove convergence of our algorithm for SDEs related to the Lyapunov exponents of linear ODEs perturbed by multiplicative white noise. To the best of our knowledge, this is the first such result in the setting of stationary averages of diffusion processes. The remaining contributions are the applications of the algorithm to several examples of interest. Section \ref{sec41} explains how the algorithm can be combined with the ideas underlying the Metropolis Adjusted Langevin Algorithm (a Markov chain Monte Carlo algorithm) to carry out numerical integration with respect to certain target measures, and then applies it to a simple Bayesian inference problem. In Section \ref{sec42}, we use our algorithm to obtain bounds on the Lyapunov exponents of linear differential equations perturbed by multiplicative white noise, and we show that in this case our approach is both sufficient and necessary (i.e., with enough computation power, it yields lower (upper) bounds arbitrarily close to the minimum (maximum) Lyapunov exponent, see Theorem \ref{convl}). Finally, in Section \ref{sec43} we explain how the algorithm can be extended to yield bounds on stationary averages $\pi(f)$ where $f$ is piecewise polynomial, and we use this extension to tackle a reliability problem from structural mechanics. 
\section{Background and preliminaries.}  
\subsection{Notation}
\label{prelim}
Throughout this paper we use the following notation:
\begin{itemize}
\item $\Eb[\cdot]$ denotes expectation with respect to the underlying probability measure $\Pb$, and we use $X_t$ and $X(t)$ interchangeably.
\item Given a function $h:\rn\to\r$, $\partial_ih$ denotes its partial derivative with respect to its $i^{th}$ argument; $\partial_{ij}h:=\partial_i\partial_jh$; $\nabla h$ denotes its gradient vector; and $\nabla^2 h$ denotes its Hessian matrix. 
\item Suppose that $\cal{M}$ is a smooth manifold. $C^2(\cal{M})$ denotes the set of real-valued, twice continuously differentiable functions on $\cal{M}$.
\item For any two matrices $A,B\in\rnm$, 
$$\iprod{A}{B}:=\sum_{i=1}^n\sum_{j=1}^mA_{ij}B_{ij}$$
denotes the trace inner product of $A$ and $B$, and $\norm{A}:=\sqrt{\iprod{A}{A}}$ denotes the Frobenius norm of $A$. 
\item Let $\nn$ be the set of $n$-tuples $\alpha:=(\alpha_1,\dots,\alpha_n)$ of natural numbers $\alpha_i\in\n$. Let $\nnd$ be the subset of $n$-tuples such that $|\alpha|:=\alpha_1+\dots+\alpha_n\leq d$. The cardinality of $\nnd$ is $r(d):={n+d \choose d}$.
We define the sum of two tuples $\alpha,\beta\in\nnd$ to be the tuple $\alpha+\beta:=(\alpha_1+\beta_1,\dots,\alpha_n+\beta_n)$.  
\item The space of real-valued vectors indexed by $\nnd$, $\{y:y_\alpha\in\r, \alpha\in\nnd\}$, is isomorphic to $\rrd$ and we make no distinction between them. Similarly for the space of real-valued matrices indexed by $\nnd$, $\{M:M_{\alpha\beta}\in\r,\alpha,\beta\in\nnd\}$, and $\rrdrd$. With this in mind, we denote the standard inner product on $\rrd$ as
$$\iprod{y}{z}:=\sum_{\alpha\in\nnd}y_\alpha z_\alpha,\qquad y,z\in\rrd,$$
%
and the standard outer product on $\rrd$ as
$$(y\otimes z)w:=\iprod{z}{w}y,\qquad y,z,w\in\rrd.$$
\item Let $\alpha\in\nnd$ and $x\in\rn$.  Monomials are denoted as $x^\alpha := \prod_{i=1}^{n} x_i^{\alpha_i}$, and 
$m_d(x):\rn\to\rrd$ denotes the vector of monomials of degree $d$ or less. 
Hence the $\alpha$-th component of the $r(d)$-dimensional vector $m_d(x)$ is given by
$$(m_d(x))_\alpha :=x^\alpha.$$
Let $\pol_d$ denote the vector space of real polynomials on $\rn$ of degree at most $d$. The set $\{x^\alpha:\alpha\in\nn_d\}$ is a basis (known as the canonical or monomial basis) of $\pol_d$, and so we can write any polynomial $p\in\pol_d$ as
$$p(x) = \sum_{\alpha\in \nn_d}\pmb{p}_\alpha x^\alpha = \iprod{\pmb{p}}{m_d(x)},\quad\forall x\in\rn,$$
where $\pmb{p}:=(\pmb{p}_\alpha)_{\alpha\in\nnd}$ in $\rrd$ is the \emph{vector of coefficients} of $p$. 
%
We denote the degree of any polynomial $p$ with $d_p$.
\item Let $\mu$ be a Borel measure on $\rn$. A vector $y$ in $\rrd$ is the \emph{vector of moments of order up to $d$} of $\mu$ if, for any $\alpha\in\nnd$, the $\alpha$-th component of $y$ is given by
$$y_\alpha=\mu(x^\alpha):=\int_{\rn} x^{\alpha}\mu(dx),$$
assuming that the integrals are well defined.
\end{itemize}

\subsection{Stability and regularity properties of $X$.}\label{stability} We now briefly review well known properties of $X$ and a relevant drift condition used in the examples below. Throughout this section we assume that $\cal{G}$ is an $(n-\ell)$-dimensional smooth submanifold of $\rn$. For this to be the case, it is sufficient that the vectors $\nabla g_1(x),\nabla g_2(x),\dots,\nabla g_\ell(x)$ form a linearly independent set, for each $x$ in $\cal{G}$, where the $g_i$'s are defined in~\eqref{eq:mani}. 

Firstly, smoothness of the components of $b$ and $\sigma$ imply that \eqref{eq:sde} has a unique strong solution $X$, which is defined up to a stopping time $\tau_\infty$~\cite{Watanabe1989}. The \emph{generator} (or Kolmogorov operator) $\cal{A}$ associated with \eqref{eq:sde} is the second order differential operator
\begin{align*}
\cal{A}h(x):=\iprod{\nabla h(x)}{b(x)}+\frac{1}{2}\iprod{\nabla^2h(x)}{a(x)} =\sum_{i=1}^nb_i(x)\partial_ih(x)+\frac{1}{2}\sum_{i,j=1}^na_{ij}(x)\partial_{ij}h(x),
\end{align*}
for $h\in C^2(\cal{G})$, $x\in\cal{G}$, and where $a:=\sigma\sigma^T$ denotes the diffusion matrix of \eqref{eq:sde}.  It is well known that if $\cal{A}$ is a hypoelliptic operator on $C^2(\cal{G})$ (see \cite{Hormander1967}), then \eqref{eq:sde} generates a strong Feller Markov process. This is the regularity property of $X$ we use in our examples below. Although this condition can be replaced with weaker ones (e.g., $X$ being a T-process \cite{Meyn1993a}), such alternative conditions usually require more work to establish in practice. The stability properties we require are summarised as follows: 
\begin{condition}[\cite{Meyn1993a,Meyn1993b}]\label{small} Suppose that the paths of the diffusion $X$ are contained in $\cal{G}$ (that is, $\Pb(\{X_t\in\cal{G},\smallskip\forall\smallskip 0\leq t\leq \tau_\infty\})=1$), and that either one of the following conditions holds:
\begin{enumerate}[label=(\roman*)]
\item the manifold $\cal{G}$ is compact.
\item (Drift condition) there exists a function $u\in C^2(\cal{G})$ and a constant $c>0$ such that for each $q\in\r$ the sub-level set
$$\{x\in\cal{G}:u(x)\leq q\}$$
is compact and such that
$$\cal{A}u(x)\leq -cu(x), $$
holds for all $x$ in $\cal{G}$ except those in a compact subset of $\cal{G}$.
\end{enumerate}
\end{condition}
\begin{theorem}[\cite{Meyn1993a,Meyn1993b}]\label{drift}
Suppose that $\cal{A}$ is hypoelliptic on $C^2(\cal{G})$ and that Condition \ref{small} holds. Then:
\begin{enumerate}[label*=\textit{(}\roman*\textit{)}] 
\item The solution of \eqref{eq:sde} is globally defined, that is, $\Pb(\{\tau_\infty=\infty\})=1$.  
\item The SDE \eqref{eq:sde} has at least one stationary measure with support contained in $\mathcal{G}$.
\item If Condition~\ref{small} \textit{(i)} holds, then for any measurable $f$ and almost every sample path $t\mapsto X_t(\omega)$, the limit \eqref{eq:conv} holds, where $\pi\in\mathcal{P}$ has support contained in $\mathcal{G}$ and depends on the starting position of the path, $Z(\omega)$. Furthermore, equation \eqref{eq:bfdglim} holds.
\item If $\cal{G}$ is not compact but Condition~\ref{small} \textit{(ii)} holds, then $\pi(u)<\infty$ for any $\pi\in\mathcal{P}$ with support contained in $\cal{G}$, and the same as in Theorem~\ref{drift} \textit{(iii)} is true for every measurable $f$ such that
$$\sup_{x\in\cal{G}}\left(\frac{\mmag{f(x)}}{1+\mmag{u(x)}}\right)<\infty.$$
\end{enumerate}
\end{theorem}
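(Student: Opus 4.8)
The plan is to deploy the Meyn--Tweedie stability theory for continuous-time Markov processes, exactly in the form developed in the cited works. The argument rests on three pillars: the regularity supplied by hypoellipticity, the recurrence and tightness supplied by Condition~\ref{small}, and the general ergodic theorems that combine the two.

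First I would dispose of non-explosion, part \textit{(i)}. In the compact case the claim is immediate, since a process whose paths remain in the compact manifold $\cal{G}$ cannot escape every compact set in finite time. Under the drift condition I would use $u$ as a Lyapunov function. The compact sublevel sets $\{u\le q\}$ force $u$ to be bounded below, so after a shift we may take $u\geq 0$; continuity of $\cal{A}u$ on the exceptional compact set together with $\cal{A}u\leq -cu\leq 0$ elsewhere then yields a global bound $\cal{A}u\leq K$ on $\cal{G}$. Dynkin's formula applied to the stopped process gives $\Eb[u(X_{t\wedge\tau_\infty})]\leq u(X_0)+Kt$, and since $u(X_t)\to\infty$ along any explosive path (again by compactness of the sublevel sets), this is Khasminskii's classical non-explosion test \cite{Khasminskii2012}, yielding $\Pb(\{\tau_\infty=\infty\})=1$.

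Existence of a stationary measure, part \textit{(ii)}, would follow from a Krylov--Bogolyubov argument. I would form the time-averaged occupation measures $t^{-1}\int_0^t\Pb(X_s\in\cdot)\,ds$ started from a fixed point of $\cal{G}$; compactness of $\cal{G}$ (case \textit{(i)}) or the drift condition (case \textit{(ii)}, which prevents mass from escaping to infinity by controlling $\Eb[u(X_t)]$ via the estimate above) guarantees that this family is tight. Any weak subsequential limit is, by the Feller property of $X$ (a consequence of hypoellipticity \cite{Hormander1967}), a stationary measure, and its support lies in $\cal{G}$ because the paths do.

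The heart of the matter, and what I expect to be the main obstacle, is the passage to the sample-path ergodic theorems, parts \textit{(iii)}--\textit{(iv)}. Here I would invoke the Meyn--Tweedie criterion that a compact set is \emph{petite} whenever the process is both strong Feller and irreducible (every nonempty open set being reachable with positive probability). Hypoellipticity gives the strong Feller property via H\"ormander's theorem \cite{Hormander1967}, while irreducibility I would obtain from the Stroock--Varadhan support theorem; hence every compact subset of $\cal{G}$ is petite. Combined with Condition~\ref{small}---where in case \textit{(i)} the compact manifold $\cal{G}$ is itself a petite set and in case \textit{(ii)} the drift condition supplies the Foster--Lyapunov inequality---this yields positive Harris recurrence, and the strong law of large numbers for positive Harris recurrent processes delivers, for $\pi$-integrable $f$ and almost every path, the limit \eqref{eq:conv} toward the invariant measure $\pi$ of the ergodic class containing the starting point $Z(\omega)$. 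In the compact case all bounded measurable $f$ are $\pi$-integrable, and the decomposition into ergodic classes yields \eqref{eq:bfdglim} as in \cite[\S 8]{Meyn1993a}. For part \textit{(iv)} the moment bound $\pi(u)<\infty$ comes from integrating the drift inequality against the stationary $\pi$; since one cannot directly assert $\pi(\cal{A}u)=0$ without a priori integrability, I would justify this through a truncation and Fatou argument on the sublevel sets, obtaining $c\,\pi(u)\leq K'$ for a constant $K'$. This integrability then upgrades the convergence to every $f$ with $\sup_{x\in\cal{G}}|f(x)|/(1+|u(x)|)<\infty$ via the $f$-norm (or $V$-uniform) ergodic theorem of Meyn--Tweedie. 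The genuinely delicate points are the establishment of petiteness---the true bridge between local regularity and global recurrence---and the adaptation of the Meyn--Tweedie framework, stated for locally compact state spaces, to the submanifold $\cal{G}$.
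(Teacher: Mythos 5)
Your overall route is the same as the paper's, which simply defers to the Meyn--Tweedie theory: the paper's proof consists of citing \cite[Thrm.~2.1]{Meyn1993b} for non-explosion under the drift condition, Theorems 3.4 and 8.1 of \cite{Meyn1993a} for the existence of invariant measures and the sample-path limits, and Theorem 4.7 of \cite{Meyn1993b} for the moment bound $\pi(u)<\infty$. Most of your unpacking of these citations is sound, but there is a genuine gap in the step you yourself identify as the heart of the matter. You propose to obtain petiteness of compact sets from the conjunction of the strong Feller property and irreducibility, deriving the latter from the Stroock--Varadhan support theorem. Irreducibility of $X$ on $\cal{G}$ is not among the hypotheses, is not implied by hypoellipticity (which yields the strong Feller property but says nothing about reachability of open sets, which is a controllability statement), and in general fails in the setting the theorem is meant to cover: the conclusion of part \textit{(iii)} is deliberately phrased so that the limiting measure $\pi$ depends on the starting point $Z(\omega)$, and \eqref{eq:bfdglim} describes a whole \emph{set} of limits, precisely because several ergodic classes may coexist inside $\cal{G}$. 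Moreover, petite sets are defined relative to an irreducibility measure, so the notion you want to establish is not even available without irreducibility. The correct bridge --- and the one the paper cites --- is the T-process theory of \cite{Meyn1993a}: a strong Feller process is a T-process, and for a T-process that is bounded in probability on average (guaranteed by compactness of $\cal{G}$, or by the drift condition) Theorem 3.4 gives existence of invariant measures and Theorem 8.1 gives the Doeblin decomposition and the law of large numbers, with no irreducibility assumption.

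A second, smaller slip: the estimate $\Eb[u(X_{t\wedge\tau_\infty})]\le u(X_0)+Kt$ suffices for non-explosion but not for the tightness of the occupation measures $t^{-1}\int_0^t\Pb(X_s\in\cdot)\,ds$ that your Krylov--Bogolyubov argument requires, since it allows $\Eb[u(X_t)]$ to grow linearly in $t$. You should instead rewrite the drift condition as $\cal{A}u\le -cu+K'$ on all of $\cal{G}$ and apply Gronwall to obtain $\Eb[u(X_t)]\le e^{-ct}u(x)+K'/c$, which is bounded uniformly in $t$; combined with compactness of the sublevel sets of $u$ and Markov's inequality this yields the required tightness, and, after the truncation/Fatou argument against a stationary $\pi$ that you sketch for part \textit{(iv)}, also $c\,\pi(u)\le K'$.
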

\begin{proof} In the case $(i)$, it is easy to argue that the solution is globally defined. In the case $(ii)$, global existence of the solution follows from \cite[Thrm.2.1]{Meyn1993b}. The rest follows from Theorems 3.4 and 8.1 in \cite{Meyn1993a}, plus Theorem 4.7 in \cite{Meyn1993b} in the case of the drift condition.
\end{proof}
\subsection*{Remark (The ensemble averages):} If, additionally to the premise of Theorem \ref{drift}, the semigroup generated by~\eqref{eq:sde} is aperiodic \cite{Meyn1993a}, then the analogous statements to Theorem~\ref{drift} \textit{(iii)} and \textit{(iv)} 
hold for the ensemble averages $\Ebb{f(X_t)}$. In this case, we have that
$$
\lim_{t\to\infty} \Eb[f(X_t)]= \int f(x) \, \tilde{\pi}(dx),
$$
where $\tilde{\pi}\in\cal{P}$ depends on the law of the initial condition $Z$, see \cite[Thrm.8.1]{Meyn1993a}.
\subsection{A relationship between the generator of \eqref{eq:sde} and its stationary measures.}\label{eqs}

The following technical lemma is necessary for the development of the algorithm presented in this paper.

An application of It\^o's formula shows that, if $h\in C^2(\rn)$, then
\begin{equation}
\label{eq:mart}
D_t^h:=h(X_t)-h(X_0)-\int_0^t\cal{A}h(X_s)ds,
\end{equation}
is a local martingale. 
The generator $\cal{A}$ evaluated at function $h$ and point $x$ describes the rate of change in time of the expected value of $h(X_t)$ conditioned on the event $\{X_t=x\}$. 
If $\pi$ is the law of $X_t$, then $\pi(\cal{A}h)$ describes the rate of change in time of $\Eb[h(X_t)]$. 
It follows that if $\pi$ is a stationary measure of \eqref{eq:sde}, then the law of $X_t$ does not change in time, 
and thus we would expect that $\pi(\cal{A}h)=0$. Unfortunately, for technical reasons, this is not always the case (see \cite{Glynn2008} for counterexamples). However, it is not difficult to find sufficient conditions on $h$ 
such that $\pi(\cal{A}h)=0$. 
The following lemma gives one such condition specialised for polynomial functions $h$, which are the focus of this paper. For a proof of the lemma see Appendix \ref{appendix1}.
\begin{lemma}\label{diff}
Let $\pi$ be a stationary measure of \eqref{eq:sde} whose moments of order $d$ exist and are finite. 
Let $h$ be polynomial with degree $d_h\leq d-\max_{i,j}\{d_{b_i},d_{a_{ij}}\}$, where $b$ is the drift vector 
and $a:=\sigma\sigma^T$ is the diffusion matrix of~\eqref{eq:sde}. Then
$$\pi(\cal{A}h)=0.$$
\end{lemma}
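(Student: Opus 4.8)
The plan is to exploit the local martingale $D_t^h$ from \eqref{eq:mart} and, by localising it, to distil an identity among expectations from which $\pi(\cal{A}h)=0$ will drop out. Since the conclusion involves only $\pi$ and $\cal{A}$, I may assume without loss of generality that $X_0\sim\pi$; stationarity then gives $X_t\sim\pi$ for every $t\geq0$, which in particular forces the solution to be globally defined, so the hitting times $\tau_n:=\inf\{t\geq0:\Mag{X_t}\geq n\}$ satisfy $\tau_n\uparrow\infty$ almost surely. Writing $M:=\max_{i,j}\{d_{b_i},d_{a_{ij}}\}$, the hypothesis $d_h\leq d-M$ ensures that each term $b_i\,\partial_ih$ has degree at most $d-1$ and each term $a_{ij}\,\partial_{ij}h$ degree at most $d-2$, so $\cal{A}h$ is a polynomial of degree at most $d$; the same holds for $h$. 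As $\pi$ has finite moments of order $d$, both $h$ and $\cal{A}h$ (and their absolute values) are $\pi$-integrable. By It\^o's formula $D_t^h=\int_0^t\iprod{\nabla h(X_s)}{\sigma(X_s)\,dW_s}$, whose integrand is continuous and hence bounded on $\{\Mag{x}\leq n\}$; thus $D_{t\wedge\tau_n}^h$ is a genuine martingale and $\Ebb{D_{t\wedge\tau_n}^h}=0$, i.e.
\[
\Ebb{h(X_{t\wedge\tau_n})}=\pi(h)+\Ebb{\int_0^{t\wedge\tau_n}\cal{A}h(X_s)\,ds}.
\]

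The integral term on the right converges to $t\,\pi(\cal{A}h)$: it is dominated by $\int_0^t\Mag{\cal{A}h(X_s)}\,ds$, which has finite expectation $t\,\pi(\Mag{\cal{A}h})$ by stationarity and Fubini, so dominated convergence applies as $t\wedge\tau_n\uparrow t$. The heart of the matter, and the step I expect to be the main obstacle, is to show that the left-hand side converges to $\pi(h)=\Ebb{h(X_t)}$; the difficulty is that $X_{t\wedge\tau_n}$ is \emph{not} distributed according to $\pi$, and the stopped value can be large precisely on the rare event $\{\tau_n\leq t\}$ where the path reaches the sphere $\Mag{x}=n$. To control this I would introduce a polynomial Lyapunov function $V(x)=(1+\Mag{x}^2)^{m}$, repeat the localised It\^o computation for $V$, and use $\Ebb{\int_0^{t\wedge\tau_n}\cal{A}V(X_s)\,ds}\leq t\,\pi(\Mag{\cal{A}V})$ to obtain the uniform bound $\sup_n\Ebb{V(X_{t\wedge\tau_n})}\leq\pi(V)+t\,\pi(\Mag{\cal{A}V})=:K_t$. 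The degree hypothesis enters a second time here: choosing $2m$ an even integer with $d_h<2m\leq d+1-M$ keeps $\cal{A}V$ of degree at most $d$ (so that $\pi(\Mag{\cal{A}V})<\infty$ and $K_t<\infty$), while making $V$ grow strictly faster than $\Mag{h}$. Such an even $2m$ exists whenever $d_h<d-M$; the borderline case $d_h=d-M$ with $d-M$ even requires a minor separate adjustment of $V$.

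With this uniform bound in hand I would conclude by splitting, using that $t\wedge\tau_n=t$ on $\{\tau_n>t\}$:
\[
\Ebb{h(X_{t\wedge\tau_n})}-\pi(h)=\Ebb{\bigl(h(X_{\tau_n})-h(X_t)\bigr)\mathbf 1_{\{\tau_n\leq t\}}}.
\]
The term $\Ebb{h(X_t)\mathbf 1_{\{\tau_n\leq t\}}}$ tends to $0$ by dominated convergence (it is dominated by $\Mag{h(X_t)}\in L^1$ and $\mathbf 1_{\{\tau_n\leq t\}}\to0$ a.s.). For the other term, on $\{\tau_n\leq t\}$ one has $\Mag{X_{\tau_n}}=n$, so $\Mag{h(X_{\tau_n})}\leq C(1+n^{d_h})$, while Chebyshev's inequality together with the uniform bound gives $\Pbb{\tau_n\leq t}\leq K_t/(1+n^2)^m\leq K_t\,n^{-2m}$; hence this term is $O(n^{d_h-2m})\to0$ because $2m>d_h$. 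Combining, $\Ebb{h(X_{t\wedge\tau_n})}\to\pi(h)$, and letting $n\to\infty$ in the first displayed identity yields $0=-t\,\pi(\cal{A}h)$ for every $t>0$, whence $\pi(\cal{A}h)=0$. The only genuinely delicate point is the uniform moment control of the stopped process, and it is exactly there that the polynomial structure of $b,\sigma$ and the gap $d_h\leq d-M$ are indispensable.
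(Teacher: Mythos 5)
Your argument is essentially correct in outline but takes a genuinely different route from the paper's. You localise in \emph{time}: stop at $\tau_n$, use that the stopped $D^h$ is a true martingale, and then work to show $\Ebb{h(X_{t\wedge\tau_n})}\to\pi(h)$ by controlling the stopped process with a polynomial Lyapunov function and Chebyshev. The paper instead localises in \emph{space}: it multiplies $h$ by smooth compactly supported cutoffs $h_m(x)=\phi(x_1/m)\cdots\phi(x_n/m)h(x)$, for which $D^{h_m}$ is a genuine martingale, bounds $\Mag{\cal{A}h_m}$ uniformly in $m$ by a polynomial of degree at most $d$ (the derivatives of $\phi(\cdot/m)$ carry harmless factors of $1/m$), and passes to the limit by dominated convergence to conclude that $D^h$ itself is a martingale; the identity $\Ebb{D^h_t}=0$ together with Tonelli/Fubini then finishes. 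The paper's route buys exactly what yours struggles with: it never needs an auxiliary function growing \emph{strictly} faster than $\Mag{h}$, so the whole range $d_h\leq d-M$, $M:=\max_{i,j}\{d_{b_i},d_{a_{ij}}\}$, is covered in one stroke. Your route, in exchange, is more self-contained (no appeal to the compact-support martingale fact) and makes the role of the degree gap very explicit.

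The one point you should not wave away is the borderline case. Your construction needs an even integer $2m$ with $d_h<2m\leq d+1-M$, which fails precisely when $d_h=d-M$ and $d-M$ is even --- and this is not an exotic corner: the algorithm applies the lemma to $h=x^\alpha$ with $\mmag{\alpha}=d-d_\cal{A}$, i.e.\ exactly at $d_h=d-M$, so as written your proof does not cover the statement it is meant to prove for half the parities. The repair is indeed minor but should be carried out: take $V(x)=(1+\Mag{x}^2)^{(d-M)/2}\log(e+\Mag{x}^2)$, which is smooth since $d-M$ is even here. Then $V=o(\Mag{x}^{d})$ and $\cal{A}V=O(\Mag{x}^{d-1}\log\Mag{x})=o(\Mag{x}^d)$ when $M\geq 1$, so $\pi(V)$ and $\pi(\Mag{\cal{A}V})$ are finite and your uniform bound $\sup_n\Ebb{V(X_{t\wedge\tau_n})}\leq K_t<\infty$ survives, while $V(x)/(1+\Mag{x}^{d_h})\to\infty$ upgrades your estimate to $\Ebb{\Mag{h(X_{\tau_n})}1_{\{\tau_n\leq t\}}}=O(1/\log n)\to 0$. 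Two further housekeeping items: on $\{\Mag{X_0}\geq n\}$ one has $X_{\tau_n}=X_0$ rather than $\Mag{X_{\tau_n}}=n$, which is disposed of by $\Ebb{\Mag{h(X_0)}1_{\{\Mag{X_0}\geq n\}}}\to 0$; and the case $M=0$ (where $2m\leq d+1-M$ no longer forces $\pi(V)<\infty$) is vacuous, since then the coefficients are constant and no stationary measure exists outside degenerate situations. With these additions your proof is complete and stands as a legitimate alternative to the paper's.
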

\section{The algorithm.}\label{method} Our algorithm constructs a tractable \emph{outer approximation} $C^d_{f,\cal{G}}$ of the set $B^d_{f,\cal{G}}$ defined by \eqref{eq:set2}. As shown below, the approximation we derive is the image of a \emph{spectrahedron} (a set defined by linear equalities and semidefinite inequalities) through a linear functional. Finding the infimum and supremum of $C^d_{f,\cal{G}}$ reduces to solving two SDPs, which can be efficiently carried out using one of several high-quality solvers freely available. Since $B^d_{f,\cal{G}} \subseteq C^d_{f,\cal{G}}$, the computed infimum (supremum) is a lower (upper) bound of $B^d_{f,\cal{G}}$. 

To introduce our method, we first take a closer look at the set $B^d_{f,\cal{G}}$ from two alternative perspectives. First, note that the set $B^d_{f,\cal{G}}$ defined in~\eqref{eq:set2} is the image of the set
$$\mathcal{P}^d_\cal{G}:= \{\pi\in\mathcal{P}:\pi  \text{ has finite }d\text{-th order moments and support contained in }\cal{G}\}$$
through the linear functional (on the vector space of signed measures) $\pi\mapsto\pi(f)$.
It is straightforward to check that, as a subset of the vector space of signed measures, $\mathcal{P}^d_\cal{G}$ is convex. Consequently, its image $B^d_{f,\cal{G}}$ is a (possibly unbounded) interval, which is fully described  
by its supremum and infimum (leaving aside whether or not $B^d_{f,\cal{G}}$ contains its endpoints).  

Alternatively, since $f$ is a polynomial (with vector of coefficients $\pmb{f}$), the set $B^d_{f,\cal{G}}$ is the image of the set
$$\cal{Y}^d_\cal{G}:=\{\text{$y \in \rrd$: $y$ is a vector of moments up to order $d$ of a measure in $\mathcal{P}^d_\cal{G}$}\}$$ 
through the linear functional (on $\rrd$) $y\mapsto\iprod{y}{\pmb{f}}.$
We now see some concrete examples of these sets.
\begin{example} To introduce our ideas, we use an example for which there is an extensive body of results. 
Consider the two-dimensional SDE
\begin{equation}\label{eq:brwncirc}dX_t=-\frac{1}{2}X_tdt+\begin{bmatrix}0&-1\\1&0\end{bmatrix}X_tdW_t,\qquad X_0=Z.\end{equation}
Applying It\^o's formula gives $d\norm{X_t}\equiv0$; hence, $\norm{X_t}=\norm{X_0}, \forall t\geq0$. If the initial condition $Z$ takes values in the circle of radius $R$, $\mathbb{S}^1_R$, then the paths of $X$ remain in $\mathbb{S}^1_R$.  
Using H\"ormander's condition~\cite{Hormander1967} it is easy to verify that, for each $R>0$, the generator $\cal{A}$ is a hypoelliptic operator on $C^2(\mathbb{S}^1_R)$. By compactness of $\mathbb{S}^1_R$, Condition \ref{small} $(i)$ is satisfied, and Theorem \ref{drift} states that, for each $R>0$, \eqref{eq:brwncirc} has at least one stationary measure with support contained in $\mathbb{S}^1_R$. It is also well known~\cite{Khasminskii1967} that for each $R$, \eqref{eq:brwncirc} has only one such measure $\pi_R$, which is the uniform distribution on $\mathbb{S}^1_R$. 
Therefore, for a given $R$ and $f$ 
\begin{equation}
\label{eq:ps}
\pi_R(f)=\frac{1}{2\pi}\int_0^{2\pi}f(R\cos(\theta),R\sin(\theta))d\theta.
\end{equation}
Note that if $R=0$, then $X \equiv 0$, hence 
$\pi_0:=\delta_0$, the Dirac measure at zero, is also a stationary measure of \eqref{eq:brwncirc}. 
Consequently, for any $d$:
$$\mathcal{P}^d_{\mathbb{S}^1_R}=\{\pi_R\}, \quad \mathcal{P}^d_{\r^2} = \{\pi_R:R\geq0\}, \quad 
\mathcal{P}^d_{\cal{G}}=\emptyset\quad \text{for all other varieties $\cal{G}$}.$$
By the symmetry of the measure $\pi_R$, it is easy to show that 
%
\begin{align*}
y_{({\alpha_1},{\alpha_2})} := \pi_R \left(x_1^{\alpha_1}x_2^{\alpha_2}\right)= 
\begin{cases} 
\dfrac{R^2 \, \pi^{3/2} \, 2^{\mmag{\alpha}}}{\mmag{\alpha}! \,\, \Gamma \left(\frac{1-\alpha_1}{2}\right) \Gamma \left(\frac{1-\alpha_2}{2}\right) \Gamma \left(\frac{1-\mmag{\alpha}}{2}\right)} 
\quad &\text{if $\alpha_1$ and $\alpha_2$ are even} \\
0 &\text{otherwise},
\end{cases}
\end{align*}
where $\Gamma(\cdot)$ denotes the gamma function. It is now straightforward to describe the sets $\cal{Y}^d_\cal{G}$. For instance, consider the sets containing the $r(2)$-dimensional vectors of moments up to order $d=2$ defined as
$y: = (y_{(0,0)},y_{(1,0)},y_{(0,1)},y_{(2,0)},y_{(1,1)},y_{(0,2)}) \in \mathbb{R}^6$.  
Then we have
$$\mathcal{Y}^2_{\mathbb{S}^1_R}=\left \{ \left(1,0,0,\frac{R^2}{2},0,\frac{R^2}{2}\right) \right\} 
\quad \text{and} \quad 
\mathcal{Y}^2_{\r^2}=\{1\}\times\{0\}\times\{0\}\times[0,\infty)\times\{0\}\times[0,\infty).$$
Using the above descriptions of the sets $\cal{P}^d_\cal{G}$ together with the map $\pi\mapsto\pi(f)$ or, alternatively, the sets $\cal{Y}^d_\cal{G}$ together with the map $y\mapsto\iprod{y}{\pmb{f}}$, we can deduce any projection of interest $B^d_{f,\cal{G}}$. For instance,
\begin{align*}
\text{for} \quad f_1(x)&=x_1x_2,     & B_{f_1,\mathbb{S}^1_R}^d & =\{0\} &\text{and}&   & B_{f_1,\r^2}^d &= \{0\} \\
\text{for} \quad f_2(x)&=x_2^2+1,   & B_{f_2,\mathbb{S}^1_R}^d &=\left \{\dfrac{R^2}{2}+1 \right \} &\text{and}& & B_{f_2,\r^2}^d& =[1,\infty) \\
\text{for} \quad f_3(x)&=x_2-2x^2_1+3,   & B_{f_3,\mathbb{S}^1_R}^d &= \left \{3-R^2 \right \} &\text{and}& &B_{f_3,\r^2}^d&=(-\infty,3].
\end{align*}
\end{example}

In the above example, we could obtain the sets $\mathcal{P}^d_\cal{G}$, $\mathcal{Y}^d_\cal{G}$ and their projections $B^d_{f,\cal{G}}$ directly from~\eqref{eq:sde}. However, this is difficult in general. 
Indeed, results from real algebraic geometry show that optimising over the cone of vectors whose components are moments of a measure is an NP-hard problem \cite{Lasserre2009}. We believe that, except for trivial cases, the same holds for $\cal{Y}^d_\cal{G}$ which is a subset of this cone. Instead, we construct here a spectrahedral outer approximation 
$\cal{O}^d_\cal{G}$ of the set $\cal{Y}^d_\cal{G}$.  
Optimising over $\cal{O}^d_\cal{G}$ consists of solving an SDP, a polynomial-time problem. Explicitly, $\cal{O}^d_\cal{G}$ is a subset of $\rrd$ defined by linear equalities and semidefinite inequalities such that 
$\cal{Y}^d_\cal{G} \subseteq \cal{O}^d_\cal{G}$. 
Because the outer approximation is a convex set, its image through the linear functional $y\mapsto\iprod{y}{\pmb{f}}$
is an interval that contains $B^d_{f,\cal{G}}$:
\begin{equation}
\label{eq:proj}
B^d_{f,\cal{G}} \subseteq C^d_{f,\cal{G}}:=\left\{\iprod{y}{\pmb{f}}:y\in\cal{O}^d_\cal{G}\right\}.
\end{equation}
%
 %
%

Hence our task is reduced to obtaining the outer approximation $\cal{O}^d_\cal{G}$. We do this in two steps:
\begin{itemize}
\item In Section \ref{lleqs}, we use Lemma \ref{diff} to construct a set of linear equalities satisfied by the moments of any stationary measure of \eqref{eq:sde}.
\item In Section \ref{sdineqs}, we construct a set of linear equalities and a semidefinite inequality satisfied by the moments of any unsigned measure with support on $\cal{G}$.
\end{itemize} 
The outer approximation $\cal{O}^d_\cal{G}$ then consists of the set of vectors in $\rrd$ that satisfy both of the above.

\subsection{Linear equalities satisfied by the moments of stationary measures.}
\label{lleqs}
By assumption, both the drift vector and the diffusion coefficients in~\eqref{eq:sde} are polynomials. Therefore, if $h$ is a polynomial, $\cal{A}h$ is also a polynomial. Suppose that $y$ belongs to $\cal{Y}^d_\cal{G}$ and choose any measure $\pi$ in $\mathcal{P}^d_\cal{G}$ that has $y$ as its vector of moments of order $d$. From Lemma~\ref{diff}, if $d\geq d_\mathcal{A}:=\max\{d_{b_i},d_{a_{ij}}\}$, then
$$\iprod{y}{\pmb{\cal{A}h}}=  \sum_{\beta\in\nnd}\pmb{(\cal{A}h)}_\beta \, y_\beta=
\sum_{\beta\in\nnd}\pmb{(\cal{A}h)}_\beta \, \pi(x^\beta)= \pi\Big(\sum_{\beta\in\nnd}\pmb{(\cal{A}h)}_\beta \, x^\beta\Big)
=\pi(\cal{A}h)=0, \quad \forall h \in\pol_{d-d_\mathcal{A}}.$$
Since $\{x^\alpha:\alpha\in\nn_{d-d_\mathcal{A}}\}$ spans $\pol_{d-d_\mathcal{A}}$, checking that $y$ satisfies the above is equivalent to checking that 
\begin{equation}
\label{eq:eqs}
\iprod{y}{\pmb{\cal{A}x^\alpha}}=0,\quad \forall \alpha\in \nn_{d-d_\mathcal{A}}.
\end{equation}
In words, every vector in $\cal{Y}^d_\cal{G}$ satisfies the $r(d-d_\mathcal{A})$ linear equalities defined by~\eqref{eq:eqs}. 

At this point, it is worth remarking that the conditions  on $h$ and $\pi$ in Lemma~\ref{eqs} are only sufficient but not necessary for $\pi(\cal{A}h)=0$ to hold, as the following example shows.
\begin{example}\label{GBM} Let $\lambda$ be a positive even integer, and consider the one-dimensional SDE
\begin{equation}\label{eq:GBM}dX_t=(1-\lambda X_t)dt+\sqrt{2}X_tdW_t,\qquad X_0=Z.\end{equation}
%
%
It is straightforward to verify that Condition \ref{small} $(ii)$ holds with $u(x):=x^\lambda$ and $\cal{G}:=\r$, and to use 
H\"ormander's condition to establish that the generator of \eqref{eq:GBM} is hypoelliptic on $C^2(\r)$.  From Theorem \ref{drift}, it follows that: \eqref{eq:GBM} has at least one stationary measure; that all of its stationary measures have moments up to order $\lambda$; and that \eqref{eq:conv} holds for any $f\in\pol_\lambda$. From \eqref{eq:eqs}, 
we deduce that given the moments of any such stationary measure $y\in \r^{\lambda+1}$, then
\begin{equation}
\label{eq:GBMeq} 
k \,(y_{k-1}-(\lambda+1-k)y_k)=0,\qquad k=1,2,\dots,\lambda-2.
\end{equation}
We are only interested in solutions to these equations that the are moments of a probability measure. Hence we can append the normalisation $y_0=1$ and solve~\eqref{eq:GBMeq} to obtain
\begin{equation}
\label{eq:invagammon}
y_k=\prod_{j=0}^{k-1}\frac{1}{\lambda-j}.
\end{equation}
%
%
%
In fact, \eqref{eq:invagammon} holds for $k=1,\dots,\lambda$ (instead of only for $k=1,\dots,\lambda-2$). 
This is easily deduced by solving the Fokker-Planck equation associated with \eqref{eq:GBM} and showing that the density of a stationary measure of \eqref{eq:GBM} is given by the inverse Gamma distribution
$\frac{1}{\lambda!}x^{-\lambda-2}e^{-\frac{1}{x}}$. 
%
Indeed, the moments of this distribution are given by \eqref{eq:invagammon} for $k=1,\dots,\lambda$. 
Additionally employing the Support Theorem of Stroock and Varadhan~\cite[Thrm.6.6]{Bellet2006}, we can conclude that this is the only stationary measure of \eqref{eq:GBM}. In conclusion, the moments of any stationary measure of \eqref{eq:GBM} satisfy \eqref{eq:GBMeq} for $k=1,\dots,\lambda$ although Lemma~\ref{diff} only implies that they are satisfied for $k=1,\dots,\lambda-2$.
%
%
\end{example}

For most SDEs, $y_0=1$ together with the equations \eqref{eq:eqs} defines a set of underdetermined linear equations as the following example illustrates.
\begin{example}\label{x3}Consider the SDE
\begin{equation}\label{eq:x3}dX_t=(1-2X_t^3)dt+\sqrt{2}X_tdW_t,\qquad X_0=Z.\end{equation}
It is straightforward to verify that Condition~\ref{small} $(ii)$ is satisfied with $u(x):=e^{x^2/2}$ and $\cal{G}:=\r$, and to use 
H\"ormander's condition to establish that the generator of \eqref{eq:x3} is hypoelliptic on $C^2(\r)$. Theorem \ref{drift} then establishes that \eqref{eq:x3} has globally defined solutions; that it has at least one stationary measure; that all stationary measures have all moments finite; and that \eqref{eq:conv} holds for any $f\in\pol$.

Equations~\eqref{eq:eqs} in this case read:
$$
k \left(y_{k-1}+(k-1)y_k-2y_{k+2} \right) = 0, \qquad  k=1,2,\dots.$$
By appending $y_0=1$ to the above, we can only solve for $y_3=y_0/2=1/2$.
The set of equations formed by the first $\tilde{k} \geq 2$ conditions (together with $y_0=1$) is underdetermined, and no other moment can be solved for. 
\end{example}

\subsection{Moment conditions.}\label{sdineqs}
That the moment equations \eqref{eq:eqs} are underdetermined in the above example is essentially the same issue that \emph{moment closure methods} attempt to address (see \cite[\S 3.4]{Schueller1997} for a review). These methods `close' the equations \eqref{eq:eqs} by heuristically removing the dependence of the first few equations on higher moments.
We do not follow this approach here. Instead, we overcome this issue by exploiting the fact that we are not interested in \emph{all} the solutions to the equations \eqref{eq:eqs}, but only in those that are the vector of moments of a probability measure with support contained in $\cal{G}$. 
There are various tractable conditions, known as \emph{moment conditions}, which are satisfied by the vector of moments of any measure with support contained in $\cal{G}$, but not in general by an arbitrary vector of real numbers. For example, trivially, the even moments of an unsigned Borel measure on $\rn$ must be non-negative. We now describe in more detail the moment conditions we use in the rest of this paper.

Let $\pi$ be a measure with vector of moments $y$ of order $d$, and let $g$ be a polynomial of degree $d$. If $\pi$ has support contained in $\{x\in\rn:g(x)=0\}$, then
$$
\iprod{y}{\pmb{g}}=\sum_{\alpha\in\nnd}\pmb{g}_\alpha y_\alpha= \sum_{\alpha\in\nnd}\pmb{g}_\alpha \pi(x^\alpha)= \pi\Big(\sum_{\alpha\in\nnd}\pmb{g}_\alpha x^\alpha\Big) =\pi(g)=0.
$$
By the definition \eqref{eq:mani}, $g_j$ is zero everywhere in $\cal{G}$ for all $j=1,2,\dots,\ell$. Thus
$\iprod{y}{\pmb{g_jh}}=0$
for every $j$ and $h\in \pol_{d-d_{g_j}}$. 
Since $\{x^\alpha:\alpha\in\nn_{d-d_j}\}$ spans $\pol_{d-d_j}$, checking that $y$ satisfies $\iprod{y}{\pmb{g_jh}}=0$ for any given $h\in \pol_{d-d_{g_j}}$ is equivalent to checking that $y$ satisfies the following $r(d-d_{g_j})$ equations:
\begin{equation}
\label{eq:t2}
\iprod{y}{\pmb{g_jx^\alpha}}=0,\qquad \forall\alpha\in \nn_{d-d_{g_j}}.
\end{equation}
%
%
To these linear equations, we append a semidefinite inequality that stems from the fact that probability measures are unsigned. 
Let $h$ be any polynomial of degree $s(d):=\lfloor d/2\rfloor$. Then it follows that
$$(h(x))^2=\iprod{m_{s(d)}(x)}{\pmb{h}}^2=\iprod{\iprod{m_{s(d)}(x)}{\pmb{h}}m_{s(d)}(x)}{\pmb{h}}=\iprod{(m_{s(d)}(x)\otimes m_{s(d)}(x))\pmb{h}}{\pmb{h}}.$$
Since $h^2$ is a non-negative function we have that
\begin{equation}
\label{eq:psd}
0 \leq \pi(h^2)= \pi \Big(\iprod{(m_{s(d)}\otimes m_{s(d)})\pmb{h}}{\pmb{h}} \Big)=
\iprod{ \pi \left(m_{s(d)}\otimes m_{s(d)}\right) \pmb{h}}{\pmb{h}}=
\iprod{M_{s(d)}(y)\pmb{h}}{\pmb{h}},
\end{equation}
where the \emph{moment matrix} 
$$M_{s(d)}(y):=\pi(m_{s(d)}\otimes m_{s(d)})\in\mathbb{R}^{r({s(d)})\times r({s(d)})}$$
denotes the element-wise integration of the matrix $m_{s(d)}\otimes m_{s(d)}$. Note that the entries of the moment matrix are a function of the moments of $\pi$:
$$\left(M_{s(d)}(y)\right)_{\alpha\beta}:=y_{\alpha+\beta},\qquad \forall \alpha,\beta\in \nn_{s(d)}.$$
Since \eqref{eq:psd} holds for all $h\in\pol_{s(d)}$, $M_{s(d)}(y)$ is positive semidefinite: 
\begin{equation}\label{eq:t1}M_{s(d)}(y)\succeq0.\end{equation}
We then combine \eqref{eq:eqs}, \eqref{eq:t2}, \eqref{eq:t1} and the normalisation $y_0=1$ to obtain our outer approximation:
\begin{equation}
\label{eq:outer}
\cal{O}^d_\cal{G}:=\left\{y\in\rrd:
\begin{array}{l} 
   y_0=1 \\ 
  \iprod{y}{\pmb{\cal{A}x^\alpha}}=0,\,\forall \alpha\in \nn_{d-d_\mathcal{A}}\\ 
  \iprod{y}{\pmb{g_jx^\alpha}}=0,\, \forall \alpha\in \n^n_{d-d_{g_j}},\,\forall j=1,\dots,\ell\\ 
  M_{s(d)}(y)\succeq 0 
\end{array} 
  \right \}  \supseteq \cal{Y}^d_\cal{G}.
\end{equation}
From \eqref{eq:proj}, it follows that the projection $C^d_{f,\cal{G}}=\left\{\iprod{y}{\pmb{f}}:y\in\cal{O}^d_\cal{G}\right\}$ contains $B^d_{f,\cal{G}}$. Therefore, we have the following bounds: 
\begin{equation}\label{eq:bounds}\rho_{f,\cal{G}}^d:=\inf C_{f,\cal{G}}^d \leq\inf B_{f,\cal{G}}^d,\qquad \sup B_{f,\cal{G}}^d\leq\sup C_{f,\cal{G}}^d=:\eta_{f,\cal{G}}^d.\end{equation}
Computing $\rho_{f,\cal{G}}^d$ and $\eta_{f,\cal{G}}^d$ can be efficiently done by solving two SDPs with $r(d)$ variables each.
\begin{example}
\label{x32} 
Consider again SDE \eqref{eq:x3} from Example \ref{x3}, whose stationary measures have moments of all orders. Hence
$$B^1_{x,\r}=B^2_{x,\r}=\dots=B^\infty_{x,\r}.$$
To find bounds on the mean of the stationary measures of the SDE we construct the outer approximations of $B^\infty_{x,\r}$, as described above. The first few such approximations are:
$$C^1_{x,\r}=\left\{y_1:y\in\mathbb{R}^2,\quad y_0=1,\quad y_0\geq0\right\},$$ 
$$C^2_{x,\r}=\left\{y_1:y\in\mathbb{R}^3,\quad y_0=1,\quad\begin{bmatrix}y_0&y_1\\y_1&y_2\end{bmatrix}\succeq 0\right\},$$
$$C^3_{x,\r}=\left\{y_1:y\in\mathbb{R}^4,\quad y_0=1,\quad\begin{bmatrix}y_0&y_1\\y_1&y_2\end{bmatrix}\succeq 0\right\},$$
$$C^4_{x,\r}=\left\{y_1:y\in\mathbb{R}^5,\quad \begin{array}{l}y_0=1\\ y_0-y_3=0\end{array}\quad\begin{bmatrix}y_0&y_1&y_2\\y_1&y_2&y_3\\y_2&y_3&y_4\end{bmatrix}\succeq 0\right\},$$
$$C^5_{x,\r}=\left\{y_1:y\in\mathbb{R}^6,\quad \begin{array}{l}y_0=1\\y_0-y_3=0\\ y_1+y_2-y_4=0\end{array}\quad\begin{bmatrix}y_0&y_1&y_2\\y_1&y_2&y_3\\y_2&y_3&y_4\end{bmatrix}\succeq 0\right\},$$
$$C^6_{x,\r}=\left\{y_1:y\in\mathbb{R}^7,\quad \begin{array}{l}y_0=1\\y_0-y_3=0\\ y_1+y_2-y_4=0\\ y_2+2y_3-y_5=0\end{array}\quad\begin{bmatrix}y_0&y_1&y_2&y_3\\y_1&y_2&y_3&y_4\\y_2&y_3&y_4&y_5\\y_3&y_4&y_5&y_6\end{bmatrix}\succeq 0\right\}.$$
Since a matrix is positive semidefinite if and only if all its principal minors are non-negative, it follows trivially that
$C^1_{x,\r}=C^2_{x,\r}=C^3_{x,\r}=\r$. Hence optimising over these sets yields uninformative bounds: 
$\rho^1_{x,\r}=\rho^2_{x,\r}=\rho^3_{x,\r}=-\infty$ and $\eta^1_{x,\r}=\eta^2_{x,\r}=\eta^3_{x,\r}=\infty$. 
The higher order approximations $d > 4$, however, lead to non-trivial bounds (Table \ref{table1}).
To obtain the endpoints of the higher order approximations $C^4_{x,\r}, C^5_{x,\r},\dots$ we used the SDP-solver SDPT3. 
\begin{table}[!ht]
\begin{center}
\begin{tabular}{c|cccccccccc}
$d$&$\leq4$&$5$&$6$&$7$&$8$&$9$&$10$&$11$&$12$&$13$\\\hline
$\rho_{x,\r}^{d}$& $-\infty$   &  $0.4133$   &    $0.4134$  & $0.6202$ & $0.6202$   &$0.6365$    & $0.6365$   & $0.6376$ &$0.6377$&$0.6377$\\
$\eta_{x,\r}^d$  &$\infty$   & $0.8283$   & $0.8282$  &   $0.6758$   & $0.6757$    & $0.6495$ &  $0.6495$&  $0.6494$&$0.6494$&$0.6428$ \\\hline
$d$&$14$&$15$&$16$&$17$&$18$&$19$&$20$&$21$&$22$&$23$\\\hline
$\rho_{x,\r}^{d}$&$0.6377$&$0.6377$&$0.6377$&$0.6377$&$0.6377$&$0.6377$&$0.6377$&$0.6377$&$0.6377$&$0.6377$ \\
$\eta_{x,\r}^d$  &$0.6428$ &$0.6404$&$0.6404$ &$0.6402$&$0.6402$&$0.6389$&$0.6389$&$0.6387$&$0.6387$&$0.6384$\\
\end{tabular}
\caption{Bounds on the mean of the stationary measures of the SDE~\eqref{eq:x3}. In total, $40$ bounds were computed taking a total CPU time of $10.3$ seconds, averaging $0.26$ seconds per bound.}\vspace{-20pt}
\label{table1}
\end{center}
\end{table}

\end{example}

For many SDEs, like those in Examples \ref{GBM} and \ref{x3}, all stationary measures have moments of some order $d$. If $\cal{G}$ is compact, $d=\infty$; otherwise, such a $d$ can be found by verifying a drift condition like the one in Condition \ref{small}. In such cases, $B^k_{f,\cal{G}}=B^d_{f,\cal{G}}$ for all $d_f\leq k\leq d$. However, as Example \ref{x32} shows,  this does not hold in general for our outer approximations. Instead, we only have that $C^k_{f,\cal{G}}\supseteq C^d_{f,\cal{G}}$ for all $d_f\leq k\leq d$. Therefore
$\{\rho^k_{f,\cal{G}}:d_f\leq k\leq d\}\text{ and } \{\eta^k_{f,\cal{G}}:d_f\leq k\leq d\}$
are monotone non-decreasing (resp. non-increasing) sequences of lower (resp. upper) bounds on $B^d_{f,\cal{G}}$. 
In practice, the best bounds are obtained by solving for the infimum/supremum of $C^k_{f,\cal{G}}$ for the largest 
$d_f\leq k\leq d$ that can be handled computationally by the solver.

\section{Applications.}\label{applications} We now consider three applications of the algorithm. To compute the bounds presented in this section we used the modelling package GloptiPoly 3 \cite{Henrion2009} to construct the SDPs corresponding to the outer approximations~\eqref{eq:outer} and the solver SDPT3 \cite{Toh1999} to solve the SDPs. All computations were carried out on a desktop computer with a 3.4 GHz processor and 16GB of memory running Ubuntu 14.04.
\subsection{Langevin diffusions, numerical integration, and an inference problem.}\label{sec41}
The Metropolis Adjusted Langevin Algorithm (MALA)~\cite{Roberts1996} is a popular Markov chain Monte Carlo algorithm (MCMC). MALA can be used to estimate integrals with respect to measures of the form
\begin{equation}
\label{eq:mu}
\pi_v(dx):=\frac{e^{v(x)}}{Z_v} \, dx,
\end{equation}
where $dx$ denotes the Lebesgue measure on $\rn$, $v:\rn\to\r$ is a smooth confining potential, and $Z_v$ is the normalising constant
$Z_v:=\int_{\rn} e^{v(x)}dx.$
It is well known that $\pi_v$ is the unique stationary measure of the \emph{Langevin diffusion}
\begin{equation}
\label{eq:LD}
dX_t=\nabla v(X_t) \, dt+\sqrt{2} \, dW_t,\qquad X_0=Z.
\end{equation}
The SDE~\eqref{eq:LD} has globally defined solutions and, regardless of the initial condition, the limit \eqref{eq:conv} holds with $\pi:=\pi_v$ for all $\pi_v$-integrable functions $f$~\cite{Roberts1996}. 
MALA proceeds by discretising $X$, adding a Metropolis accept-reject step to preserve stationarity of $\pi_v$, and simulating the resulting chain. The time averages of the simulation then converge to the desired average~\cite{Roberts1996}.

Since $\pi_v$ is the unique stationary measure of \eqref{eq:LD}, we can use our algorithm to directly compute bounds on 
$\pi_v(f)$ when both $f$ and $v$ are polynomials, circumventing any discretisation or simulation.
We illustrate this idea with the following simple Bayesian inference problem.

\begin{example}
\label{pest} 
The scalar noisy time-varying recurrence equation
\begin{equation}
\label{eq:rec}
z_k=p_1z_{k-1}+p_2\frac{z_{k-1}}{1+z_{k-1}^2}+p_3\cos(1.2(k-1))+\xi_k,
\end{equation}
is often used to benchmark parameter and state estimation algorithms~\cite{Gordon1993,Carlin1992}. For simplicity, we assume that the state $\{z_k:k=1,\dots,N\}$ is observable and we focus on the problem of estimating the parameters $p_1, p_2,$ and $p_3$. The additive noise $\{\xi_k:k=1,\dots,N\}$ is typically taken to be an i.i.d.\ sequence of normally distributed random variables. Since Gaussianity of random variables is not important in our algorithm, we instead choose $\{\xi_k:k=1,\dots,N\}$ to be an i.i.d. sequence of random variables with bimodal law
$$
\mu_\xi(dx):= \pi_{u_\xi}(dx)  \qquad \text{with} \qquad {u_\xi}(x)=3x^2-x^4.
$$
where $\pi_{u_\xi}$ is as in~\eqref{eq:mu}, see Fig.\ref{conds}a. 
Choosing parameters $p:=(p_1,p_2,p_3)=(0.5,2,1)$ and $z_0=2$, we use~\eqref{eq:rec} to generate $N$ samples $z:=\{z_1,z_2,\dots,z_N\}$, see Fig.\ref{conds}b. The inference problem is to use the generated samples $z$ and the initial condition $z_0$ to estimate the parameters $p$. 

Taking a Bayesian perspective, we first choose a prior distribution $\mu_0$ over the parameters and then we extract information from the posterior distribution $\mu_{p|z}$, see \cite{Stuart2010}. Our algorithm can be used to this end if the prior $\mu_0$ is of the form~\eqref{eq:mu} with a polynomial potential:
$$\mu_0(dx):= \pi_{u_0}(dx) \qquad \text{and  $u_0$ is a polynomial.}$$
From Bayes' formula, the posterior also takes the form~\eqref{eq:mu}: 
%
$$\mu_{p|z}(dx) = \pi_v (dx) \quad \text{with}\quad
v(p)=\sum_{k=1}^N u_\xi\left(z_k-\left(p_1z_{k-1}+p_2\frac{z_{k-1}}{1+z_{k-1}^2}+p_3\cos\left(1.2(k-1)\right)\right)\right)+u_0(p).$$
%
We can then use our algorithm to yield lower and upper bounds on the posterior means
 $\mu_{p|z}(p_1)$, $\mu_{p|z}(p_2)$ and $\mu_{p|z}(p_3)$, and upper bounds on the total variance 
$$\mathrm{var}_{p|z}=\mu_{p|z}\left((p_1-\mu_{p|z}(p_1))^2 \right)+
\mu_{p|z}\left((p_2-\mu_{p|z}(p_2))^2 \right)+
\mu_{p|z}\left((p_3-\mu_{p|z}(p_3))^2 \right).$$
of the posterior distribution $\mu_{p|z}$. For simplicity, we chose our prior $\mu_0$ to be a unit variance zero mean normal distribution, i.e., 
$u_0(p):=-\norm{p}^2/2$. The results are shown in Fig.\ref{conds}c,d. 
For $N \geq 15$ samples, we obtain small upper bounds on the total variance (two orders of magnitude smaller than the lower bounds on the posterior means). For this reason, we expect the posterior distribution to resemble a Dirac measure at the vector of posterior means, indicating that the posterior means are appropriate estimators of the parameters (as confirmed in Fig.\ref{conds}c), thus solving our inference problem.
\begin{figure}[h!]		
	\begin{center}
	\includegraphics[width=0.9\textwidth]{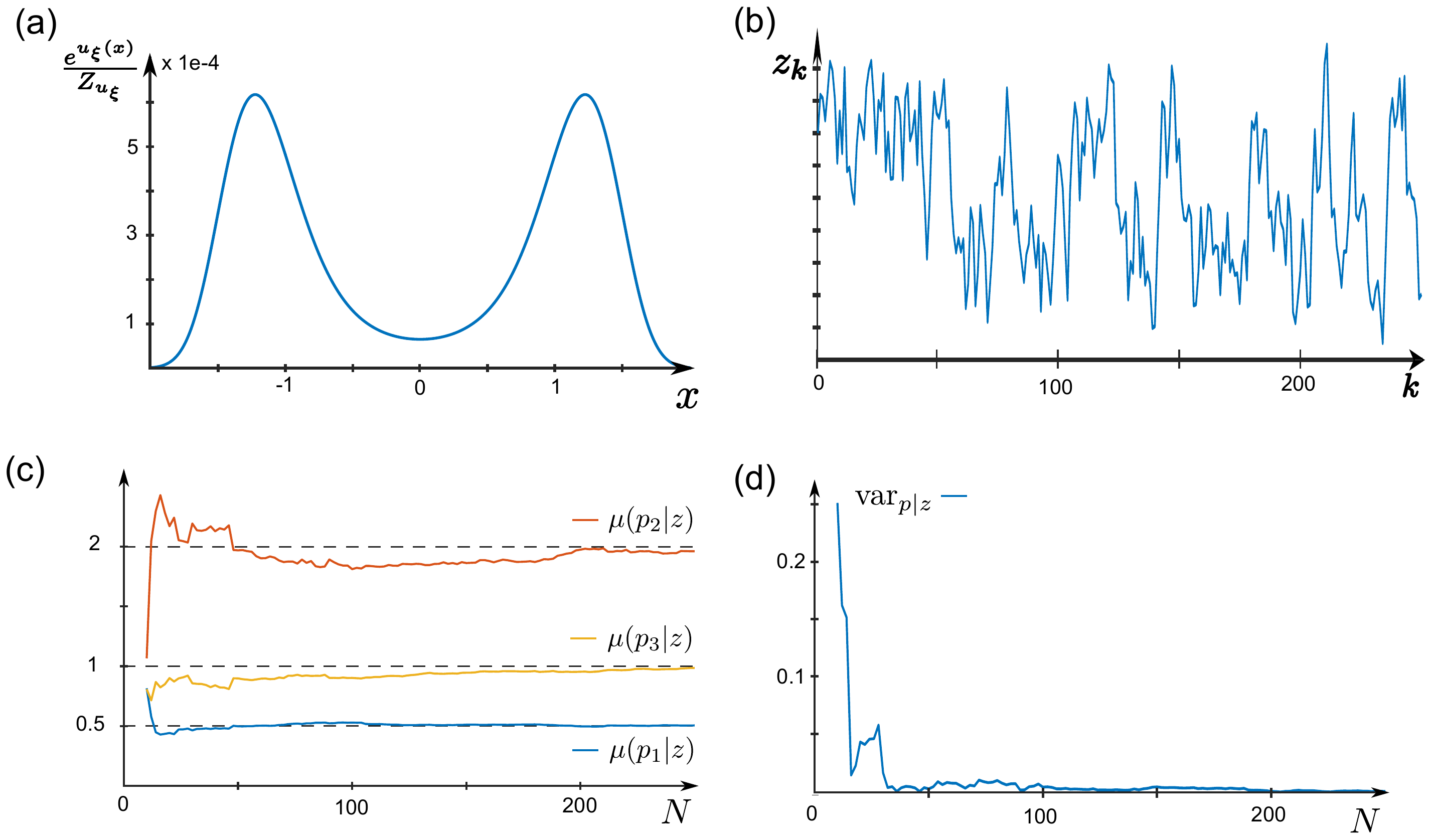} 
	\vspace{-5pt}
	\end{center}
\caption{(a) The density of the noise $\xi$. (b) Sample path of length $N=250$ generated to estimate the parameters $p$. 
The noise was generated by inverting the cdf of $\mu_\xi$ numerically (uniform grid on $[-10,10]$ with step size of $10^{-4}$) and drawing independent samples from a uniform $[0,1]$ distribution. 
(c) Upper and lower bounds on the means $\mu_{p|z}(p_1),\mu_{p|z}(p_2), \mu_{p|z}(p_3)$ of the posterior distribution $\mu_{z|p}$ (solid lines) plotted against the number of samples $N=10,12,\dots,250$ of $z$ used to generate the posterior.  The actual values of the parameters used to generate the samples are shown with dashed lines. The upper and lower bounds were computed by solving \eqref{eq:bounds} using $d=5$ and the appropriate objective $f(x)=x_1,x_2,$ or $x_3$. The gap between the upper and lower bounds was always smaller than $10^{-2}$ and hence the upper and lower bounds are indistinguishable in the plot.
(d) Upper bounds on the total variance of the posterior distribution, $\rm{var}_{p|z}=\mu_{p|z}(p_1^2+p_2^2+p_3^2)-\mu_{p|z}(p_1)^2-\mu_{p|z}(p_2)^2-\mu_{p|z}(p_3)^2$. These were obtained by computing upper bounds on $\mu_{p|z}(p_1^2+p_2^2+p_3^2)$ (solving \eqref{eq:bounds} with $d=5$ and $f(x)=x_1^2+x_2^2+x_3^2$), and combining these bounds with the lower bounds computed for the posterior means.
In total, $847$ bounds were computed taking a total CPU time of $720$ seconds, averaging $0.85$ seconds per bound. 
 }
\label{conds}
\end{figure}
%
\end{example}
It is important to remark that Example~\ref{pest} can be solved equally well with MCMC methods---indeed, MCMC algorithms scale better than ours with the dimension of the target measure $\pi_v$. However, our alternative approach presents some attractive features: it is fast (see caption of Fig.\ref{conds}) and simple to implement; 
no tuning of the algorithm is required (e.g., choosing the discretisation step size in MALA);
and it delivers deterministic bounds on the integrals of interest instead of stochastic estimates (hence avoiding issues concerning the convergence of MCMC simulations). 
Our algorithm can also provide useful information in situations where MCMC methods face difficulties; in particular when  
the target distribution has several isolated modes. In such cases, MCMC algorithms get stuck at one of the modes and do not explore the rest of the target distribution. Our numerical observations suggest that our algorithm is also affected by the presence of isolated modes, producing a large gap between the upper and lower bounds for the desired integrals (since each bound is stuck at a different mode). The presence of such large gaps can alert the practitioner to the existence of isolated modes, something which is often not obvious for target distributions of dimension three or more. Methods designed to deal with isolated modes, such as simulated-tempering~\cite{Carlin1992}, can then be used instead of standard a Metropolis-Hastings MCMC method.
\subsection{Lyapunov exponents of linear SDEs driven by multiplicative white noise.}\label{sec42}
In practical applications involving systems of linear ordinary differential equations (ODEs), we are interested in situations where the parameters are perturbed by Gaussian white noise. In those cases, we obtain the following class of linear stochastic differential equations
\begin{equation}\label{eq:linsde}
dX(t)=AX(t) \, dt+\sum_{i=1}^mB_iX(t) \, dW_i(t),\qquad X(0)=Z,
\end{equation}
where $A\in\rnn$, $B_i\in\rnn$ and $W_i:=\{W_i(t):t\geq0\}$ are $m$ independent standard Brownian motions on~$\r$.  It is well known that \eqref{eq:linsde} has globally defined solutions. 
Furthermore, there exists a jointly continuous process $\{X^x_t:t\geq0,x\in\rn\}$ such that 
$X^Z:=\{X_t^Z:=X_t^\cdot\circ Z:t\geq0\}$ is the unique solution of \eqref{eq:linsde} (see \cite[Thrm. 21.3]{Kallenberg2001}). 

In 1967, Khas'minskii~\cite{Khasminskii1967} made the following observation. Applying It\^o's formula twice, he found that the projection of $X_t^Z$ onto the unit sphere, $\Lambda_t^Z:=X_t^Z/\norm{X_t^Z}$, satisfies the SDE
\begin{equation}\label{eq:Lam}
d\Lambda^Z(t) = u_0(\Lambda^Z(t))dt + \sum_{i=1}^mu_i(\Lambda^Z(t))dW_i(t),\qquad \Lambda^Z(0)=Z/\norm{Z},
\end{equation}
where  
$u_0(x):=Ax-\iprod{x}{Ax}x-\sum_{i=1}^m\left(\frac{1}{2}\norm{B_ix}^2x+\iprod{x}{B_ix}B_ix-\frac{3}{2}\iprod{x}{B_ix}^2x\right),$
and $u_i(x):=B_ix-\iprod{x}{B_ix}x$. 
In addition, $\Gamma_t^Z:=\ln\norm{X_t^Z}$ satisfies
\begin{equation}
\label{eq:Gam}
\Gamma^Z(t)=\ln{\norm{Z}}+\int_0^tQ(\Lambda^Z(s))ds+\sum_{i=1}^m\int_0^t\iprod{\Lambda^Z(s)}{B_i\Lambda^Z(s)}dW_i(t),
\end{equation}
where $Q(x):=\iprod{x}{Ax}+\frac{1}{2}\sum_{i=1}^m\norm{B_ix}^2-\iprod{x}{B_ix}^2.$
It is not difficult to argue \cite[Lem. 6.8]{Khasminskii2012} that
\begin{equation}
\label{eq:MLLN}
\lim_{t\to\infty}\frac{1}{t}\left(\sum_{i=1}^m\int_0^t\iprod{\Lambda^Z(s)}{B_i\Lambda^Z(s)}dW_i(t)\right)=0,\qquad\Pb\text{-almost surely}.
\end{equation}
Suppose that the generator of $\Lambda^Z:=\{\Lambda_t^Z:t\geq0\}$ is hypoelliptic on $C^2(\mathbb{S}^{n-1})$, where $\mathbb{S}^{n-1}:=\{x\in\rn:\norm{x}=1\}$ denotes the unit sphere in $\rn$. Since $\Lambda^Z$, by definition, takes values in $\mathbb{S}^{n-1}$, Theorem~\ref{drift} tells us that \eqref{eq:Lam} has at least one stationary measure, and together with \eqref{eq:Gam} and \eqref{eq:MLLN} that for $\Pb$-almost every sample path $t\mapsto X^Z(t)$ there exists a stationary measure $\pi$ of \eqref{eq:Lam} such that
\begin{equation}
\label{eq:leconv}
\lim_{t\to\infty}\frac{\Gamma^Z(t)}{t}=\lim_{t\to\infty}\frac{1}{t}\int_0^t Q\left(\Lambda^Z(s)\right) \,ds=\pi(Q).
\end{equation}
Typically, the above integral is estimated by choosing an appropriate discretisation scheme for \eqref{eq:linsde}, simulating the resulting chain, and computing the corresponding time average \cite{Talay1991}. We instead exploit the fact that $u_0,\dots u_m$ and $Q$ are all polynomials and apply our algorithm on \eqref{eq:Gam} to compute bounds for~$\pi(Q)$. In particular, for any $d\geq d_Q$ we have that: 
$$\rho^d_{Q,\mathbb{S}^{n-1}}\leq\lim_{t\to\infty}\frac{\Gamma^Z(t)}{t}\leq\eta^d_{Q,\mathbb{S}^{n-1}},
\qquad\Pb\text{-almost surely},$$
where $\rho^d_{Q,\mathbb{S}^{n-1}}$ and $\eta^d_{Q,\mathbb{S}^{n-1}}$ are as in \eqref{eq:bounds} with notation adapted to~\eqref{eq:Lam}.
Note that
\begin{align*}& \left\{\omega\in\Omega:\lim_{t\to\infty}\frac{\Gamma^Z_t(\omega)}{t}<0\right\}\subseteq\left\{\omega\in\Omega:\limsup_{t\to\infty}\norm{X^Z_t(\omega)}=0\right\},\\ & \left\{\omega\in\Omega:\lim_{t\to\infty}\frac{\Gamma^Z_t(\omega)}{t}>0\right\}\subseteq\left\{\omega\in\Omega:\liminf_{t\to\infty}\norm{X^Z_t(\omega)}=\infty\right\},\end{align*}
which implies that 
\begin{align*}
 \eta^d_{Q,\mathbb{S}^{n-1}}<0  \Rightarrow \Pb\left(\left\{\limsup_{t\to\infty}\norm{X^Z_t}=0\right\}\right)=1, \quad 
\text{for any initial condition $Z$}, \\
\rho^d_{Q,\mathbb{S}^{n-1}}>0 \Rightarrow \Pb\left(\left\{\liminf_{t\to\infty}\norm{X^Z_t}=\infty\right\}\right)=1, \quad
\text{for any initial condition $Z$},
\end{align*}
i.e., the equilibrium solution $X^0 \equiv 0$ of \eqref{eq:linsde} is almost surely asymptotically stable 
if $\eta^d_{Q,\mathbb{S}^{n-1}}<0$, and almost surely asymptotically unstable if $\rho^d_{Q,\mathbb{S}^{n-1}}>0$. 
%
%
Therefore, our algorithm applied to~\eqref{eq:Lam} yields a \emph{sufficient} test for the asymptotic stability 
or instability of~\eqref{eq:linsde}. The method also yields a \emph{necessary} test for asymptotic stability in the following sense:
\begin{theorem}\label{convl}Suppose that the generator of $\Lambda^Z$ is hypoelliptic on $C^2(\mathbb{S}^{n-1})$. Let
$$\lambda(Z):=\lim_{t\to\infty}\frac{\log\norm{X^Z_t}}{t},\qquad \lambda_-:=\inf_{Z}\lambda(Z),\qquad\lambda_+:=\sup_{Z}\lambda(Z),$$
where the infimum and supremum are taken over the set of initial conditions $Z$ that are Borel measurable random variables on $\rn$. For sufficiently large $d$,
$$\rho_{Q,\mathbb{S}^{n-1}}^d>-\infty,\qquad \eta_{Q,\mathbb{S}^{n-1}}^d<\infty.$$
Furthermore,
$$\lim_{d\to\infty}\rho_{Q,\mathbb{S}^{n-1}}^d=\lambda_-,\qquad \lim_{d\to\infty}\eta_{Q,\mathbb{S}^{n-1}}^d=\lambda_+.$$
\end{theorem}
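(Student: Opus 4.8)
The plan is to split the statement into two essentially independent halves: (A) a probabilistic identification of $\lambda_-$ and $\lambda_+$ with the extreme stationary averages of $Q$, and (B) a convergence result for the moment hierarchy on the compact sphere. Both rely on compactness of $\mathbb{S}^{n-1}$: since the sphere is compact, $B^d_{Q,\mathbb{S}^{n-1}}=B^\infty_{Q,\mathbb{S}^{n-1}}$ for every $d\geq d_Q$, and the set $\mathcal{P}$ of stationary measures of \eqref{eq:Lam} is a weak-$*$ closed, hence weak-$*$ compact, convex subset of the probability measures on $\mathbb{S}^{n-1}$. Therefore $\mathcal{M}:=B^\infty_{Q,\mathbb{S}^{n-1}}=\{\pi(Q):\pi\in\mathcal{P}\}$ is the continuous linear image of a compact convex set, i.e. a compact interval $[\inf\mathcal{M},\sup\mathcal{M}]$. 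The theorem then reduces to proving $\lambda_-=\inf\mathcal{M}$, $\lambda_+=\sup\mathcal{M}$, and $\rho^d_{Q,\mathbb{S}^{n-1}}\uparrow\inf\mathcal{M}$, $\eta^d_{Q,\mathbb{S}^{n-1}}\downarrow\sup\mathcal{M}$.

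For part (A), the inclusion $\{\lambda(Z):Z\}\subseteq\mathcal{M}$ is immediate from \eqref{eq:leconv}, which exhibits every Lyapunov exponent as $\pi(Q)$ for some $\pi\in\mathcal{P}$; this already yields $\lambda_-\geq\inf\mathcal{M}$ and $\lambda_+\leq\sup\mathcal{M}$. For the reverse inequalities I would apply Bauer's maximum principle: the linear functional $\pi\mapsto\pi(Q)$ attains its supremum over the compact convex set $\mathcal{P}$ at an extreme point, and the extreme points of the set of stationary measures are the ergodic ones, so $\sup\mathcal{M}=\pi^\star(Q)$ for some ergodic $\pi^\star$. Since the generator of $\Lambda^Z$ is hypoelliptic on the compact manifold $\mathbb{S}^{n-1}$, the pointwise ergodic theorem for $\Lambda^Z$ (cf. Theorem \ref{drift}(iii)) ensures that from $\pi^\star$-almost every starting point the time average of $Q$ converges to $\pi^\star(Q)$; taking $Z$ to be a point mass at such a point gives $\lambda(Z)=\pi^\star(Q)=\sup\mathcal{M}$, whence $\lambda_+\geq\sup\mathcal{M}$. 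The same argument applied to $-Q$ gives $\lambda_-\leq\inf\mathcal{M}$, so $\lambda_\pm$ coincide with the endpoints of $\mathcal{M}$.

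For the finiteness claim and the boundedness used in part (B), I would prove that the spectrahedron $\cal{O}^d_{\mathbb{S}^{n-1}}$ is uniformly bounded. Here the variety is cut out by $g_1(x):=\norm{x}^2-1$, so the equality constraints \eqref{eq:t2} read $\sum_{i=1}^n y_{\alpha+2e_i}=y_\alpha$ for all admissible $\alpha$, where $e_i$ is the $i$-th standard multi-index. Combined with $M_{s(d)}(y)\succeq0$, whose diagonal entries $y_{2\beta}$ are therefore non-negative, this recursion forces $0\leq y_{2\beta}\leq y_0=1$, and Cauchy–Schwarz for the positive semidefinite form, $|y_{\alpha+\beta}|\leq\sqrt{y_{2\alpha}y_{2\beta}}\leq1$, bounds $|y_\gamma|\leq1$ for all $|\gamma|\leq 2s(d)$. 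Since $\cal{O}^d_{\mathbb{S}^{n-1}}\supseteq\cal{Y}^d_{\mathbb{S}^{n-1}}\neq\emptyset$ (a stationary measure exists by Theorem \ref{drift}), the interval $C^d_{Q,\mathbb{S}^{n-1}}$ is non-empty and bounded once $d\geq\max\{d_Q,2\}$, giving $\rho^d_{Q,\mathbb{S}^{n-1}}>-\infty$ and $\eta^d_{Q,\mathbb{S}^{n-1}}<\infty$; monotonicity then provides the limits, with $\lim_d\rho^d\leq\inf\mathcal{M}$ and $\lim_d\eta^d\geq\sup\mathcal{M}$ (because $\rho^d\leq\inf B^d=\inf\mathcal{M}$ and dually for $\eta^d$).

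The heart of the argument is the reverse inequality $\lim_d\rho^d\geq\inf\mathcal{M}$ (and its $\eta$ analogue). I would take near-optimal $y^{(d)}\in\cal{O}^d_{\mathbb{S}^{n-1}}$ with $\iprod{y^{(d)}}{\pmb{Q}}\to\lim_d\rho^d$ and use the uniform bound $|y^{(d)}_\gamma|\leq1$ to extract, by a diagonal argument, a subsequence converging coordinatewise to an infinite sequence $y^\star$. Each defining constraint of $\cal{O}^d_{\mathbb{S}^{n-1}}$ is of fixed finite size, so $y^\star$ satisfies $y^\star_0=1$, all generator equalities \eqref{eq:eqs}, all sphere equalities, and $M_s(y^\star)\succeq0$ for every $s$. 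The key step is then a representation theorem: because the quadratic module generated by $\pm(\norm{x}^2-1)$ is Archimedean (the sphere is compact), such a sequence is the moment sequence of a unique Borel probability measure $\pi$ supported on $\mathbb{S}^{n-1}$. Translating the generator equalities through this identification gives $\pi(\cal{A}x^\alpha)=0$ for all $\alpha$, hence $\pi(\cal{A}h)=0$ for every polynomial $h$; since polynomials are dense in $C^2(\mathbb{S}^{n-1})$ and infinitesimal invariance implies invariance for the hypoelliptic Feller generator on the compact manifold, $\pi$ is stationary, i.e. $\pi\in\mathcal{P}$. Thus $\lim_d\rho^d=\iprod{y^\star}{\pmb{Q}}=\pi(Q)\geq\inf\mathcal{M}$, and with the previous paragraph $\lim_d\rho^d=\inf\mathcal{M}=\lambda_-$; the supremum case is identical. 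I expect the main obstacle to be exactly this last paragraph: verifying the Archimedean condition so that the limiting moment sequence genuinely arises from a measure on $\mathbb{S}^{n-1}$, and justifying the passage from $\pi(\cal{A}h)=0$ on polynomials to full stationarity of $\pi$ (the converse of Lemma \ref{diff}), which on a compact manifold should follow from density together with a standard infinitesimal-invariance argument but must be handled with care.
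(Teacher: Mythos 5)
Your overall architecture matches the paper's: identify $\lambda_\pm$ with the endpoints of the set of stationary averages of $Q$, show that the generator equalities $\pi(\cal{A}x^\alpha)=0$ characterise the stationary measures of \eqref{eq:Lam}, and then invoke compactness of the sphere to get convergence of the SDP hierarchy. Two of your sub-arguments differ from the paper's in a genuine way. For the identification of $\lambda_\pm$ you use Bauer's maximum principle and the ergodic decomposition (extreme points of $\mathcal{P}$ are ergodic, then the pointwise ergodic theorem from a $\pi^\star$-typical deterministic start); the paper instead proves the full set equality $\cal{S}=B_{Q,\mathbb{S}^{n-1}}$ directly by starting the process with law $\pi$ and applying Birkhoff. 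Your route is slightly more economical for the theorem as stated (only the endpoints are needed); the paper's gives the stronger set identity. For the convergence of the hierarchy you re-derive the relevant machinery from scratch --- the recursion $\sum_i y_{\alpha+2e_i}=y_\alpha$ forced by $g_1=\norm{x}^2-1$ plus positive semidefiniteness of $M_{s(d)}(y)$ to get $\mmag{y_\gamma}\leq 1$, then a diagonal extraction and Putinar's theorem for the Archimedean module generated by $\pm g_1$ --- whereas the paper simply cites Theorem 4.3 of Lasserre (2009), which packages exactly this argument. Both are sound; yours is self-contained, the paper's is shorter.

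The one place where your proposal stops short of a proof is precisely the step the paper spends most of its appendix on: passing from $\pi(\cal{A}h)=0$ for all polynomials $h$ to stationarity of $\pi$ (the converse of Lemma \ref{diff}). You correctly flag this, but ``density of polynomials plus a standard infinitesimal-invariance argument'' hides two nontrivial points that the paper has to handle explicitly. First, the approximation must be in a $C^2$ sense (uniform closeness of $h$, $\partial_i h$ and $\partial_{ij}h$ on the sphere), since $\cal{A}$ is second order; this is the content of Lemma \ref{approx2} and does not follow from plain Weierstrass approximation. Second, infinitesimal invariance is upgraded to invariance by showing that $u_t(x):=\Ebb{h(\Lambda^x_t)}$ is itself a smooth function of $x$ (via smoothness of the stochastic flow $x\mapsto\Lambda^x_t$, which uses linearity of \eqref{eq:linsde}) and satisfies the backward equation $\partial_t\, \Ebb{u_t(\Lambda_s^x)}\big|_{s=0}=(\cal{A}u_t)(x)$ (Lemma \ref{FP}), so that $\frac{d}{dt}\pi(u_t)=\pi(\cal{A}u_t)=0$ can legitimately be invoked. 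Without an argument of this type --- or an appeal to a citable ``infinitesimally invariant implies invariant'' result for hypoelliptic diffusions on compact manifolds --- your chain from the limiting moment sequence to a genuine stationary measure is not closed, and this is exactly where the real work of the theorem lies.
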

\begin{proof} Let $\cal{A}$ denote the generator of $\Lambda^Z$ and not of $X^Z$. The theorem is proved only for the lower bounds $\rho_{Q,\mathbb{S}^{n-1}}^d$. The proof for the upper bounds is identical. The proof proceeds in three steps:
\begin{enumerate}
\item We show that the set of limits 
$\cal{S}:=\{\lambda(Z):Z\text{ is a Borel measurable random variable on }\rn\}$
is the same as the set 
$B_{Q,\mathbb{S}^{n-1}}:=B_{Q,\mathbb{S}^{n-1}}^1=\dots=B_{Q,\mathbb{S}^{n-1}}^\infty.$
\item We show that the equalities~\eqref{eq:eqs} fully characterise the stationary measures of \eqref{eq:Lam}, in the sense that if $\pi$ is a Borel probability measure with support contained in $\mathbb{S}^{n-1}$ such that
\begin{equation}\label{eq:eqs2}
\pi(\cal{A}x^\alpha)=0,\, \forall\alpha\in\nn,
\end{equation}
then $\pi$ is a stationary measure of \eqref{eq:Lam}. 
\item We then only have to apply Theorem 4.3 in \cite{Lasserre2009}, which shows that, for large enough $d$,  
$\rho^d_{Q,\mathbb{S}^{n-1}}>-\infty$, and
$$\lim_{d\to\infty}\rho^d_{Q,\mathbb{S}^{n-1}}=\inf \left\{\pi(Q):
\begin{array}{l}
\pi \text{ is a Borel probability measure with support contained in }\mathbb{S}^{n-1}\\ 
\text{ that satisfies $\pi(\cal{A}x^\alpha)=0,\, \forall \alpha \in \nn$}
\end{array}
\right \}.$$
Then~\eqref{eq:eqs} fully characterises the stationary measures of \eqref{eq:Lam}, which implies that 
$$\lim_{d\to\infty}\rho^d_{Q,\mathbb{S}^{n-1}}=\inf B_{Q,\mathbb{S}^{n-1}}=\inf\cal{S}=\lambda_-.$$
For the detailed proof, see Appendix \ref{appendix2}. 
\end{enumerate}
\end{proof}

\begin{example}
\label{stabuns}
We exemplify the use of our algorithm in computing Lyapunov exponents through a classic example from \cite{Khasminskii1967}. The question of whether an unstable linear system of ODEs could be stabilised by physically realisable multiplicative noise (i.e., multiplicative noise in the sense of Stratonovich) received ample attention in the 1960s. It was shown that this cannot be achieved in one-dimensional systems, and it was hypothesised that it could not be achieved in higher dimensional systems either~\cite{Leibowitz1963}. Khas'misnkii \cite{Khasminskii1967} disproved this hypothesis with the following counterexample:
\begin{equation}\label{eq:counter}\begin{matrix}
d X_1(t)=c_1 \,X_1(t)\circ d t+\sigma\left(X_1(t)\circ d W_1(t)+X_2\circ dW_2(t)\right)\\
d X_2(t)=c_2 \, X_2(t)\circ d t+\sigma\left(X_2(t)\circ d W_1(t)-X_1\circ dW_2(t)\right)
\end{matrix}
\end{equation}
where $c_1>0$, $c_2<0$, $\sigma>0$, and $\circ d$ denotes the Stratonovich differential. For this two-dimensional SDE, the projection \eqref{eq:Lam} lives in a one dimensional manifold (the unit circle), and thus by changing to polar coordinates one can find analytical expressions for its stationary measures. In this case there is a unique stationary measure and
%
\begin{equation}\label{eq:lyaden}\pi(Q)=\frac{\int_0^{2\pi}\left(\frac{\sigma^2}{2}+c_1\cos^2(\phi)+c_2\sin^2(\phi)\right)e^{\frac{(c_1-c_2)}{\sigma^2}\cos^2(\phi)}d\phi}{\int_0^{2\pi}e^{\frac{(c_1-c_2)}{\sigma^2}\cos^2(\phi)}d\phi}.\end{equation}
Khas'minskii then argued that one can always find a sufficiently large (in absolute value) $c_2$  and $\sigma$ so that 
$\pi(Q)<0$, i.e., so that \eqref{eq:counter} is stable. 

Instead, we can verify Khas'minskii's findings for given $c_1$ and $c_2$ and $\sigma$ by solving for the bounds \eqref{eq:bounds}. As an example, we analysed the system~\eqref{eq:counter} with $c_1=1$ and $c_2=-30$. 
From the bounds presented in Fig.\ref{lya}, we conclude that, for these parameters, \eqref{eq:counter} is stable 
for $3\lesssim\sigma\lesssim3.7$ and unstable otherwise.

\begin{figure}[h!]		
	\begin{center}
	\includegraphics[width=0.75\textwidth]{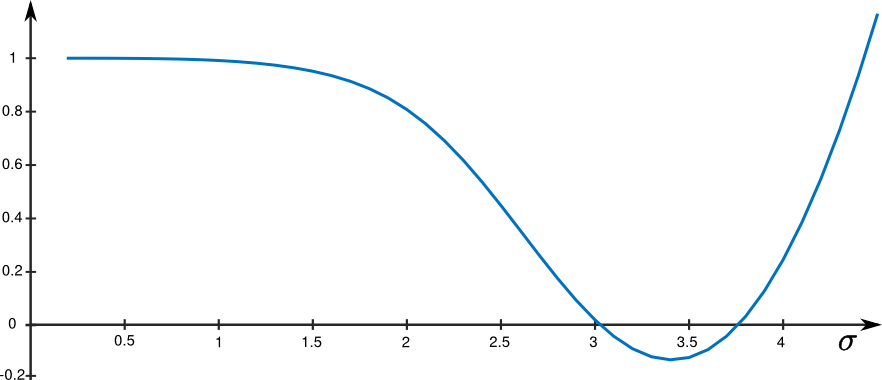} 
	\vspace{-5pt}
	\end{center}
\caption{Upper and lower bounds on the Lyapunov exponents of the SDE~\eqref{eq:counter} with $c_1=1$ and $c_2=-30$ computed  for $\sigma=0.2,0.3,\dots,4.5$ by solving~\eqref{eq:bounds} with $d=16$. The gap between the lower and the upper bounds was always smaller than $10^{-3}$ and hence the two sets of bounds are indistinguishable in the plot. In total $88$ bounds were computed for a total CPU time of $364$ seconds, averaging $4.1$ seconds per bound.}
\label{lya}
\end{figure}
\end{example}

In Example~\ref{stabuns}, we could have evaluated~\eqref{eq:lyaden} numerically instead of computing \eqref{eq:bounds}. However,  such analytical expressions for stationary measures of \eqref{eq:Lam} are not available in higher dimensions. 
As explained in \cite{Ariaratnam1992} (see also~\cite{Weiqiu2002}): ``The direct use of Khas'minskii's method to higher dimensional systems has not met with much success because of the difficulty of studying diffusion processes occurring on surfaces of unit hyperspheres in higher dimensional Euclidean spaces". Our approach complements Khas'minskii's procedure: it is simple to implement; the number of stationary measures of \eqref{eq:Lam} is not a limitation; and the fact that the measures have support on the unit sphere is an advantage instead of a disadvantage, as Theorem~\ref{convl} shows.
\subsection{Piecewise polynomial averages, a noisy nonlinear oscillator and structural reliability problems.}\label{sec43} Our algorithm can be extended to bound stationary averages $\pi(f)$ where $f$ is a piecewise polynomial of the type
\begin{equation}
\label{eq:piece}
f=\sum_{i=1}^Nf_i \, 1_{\cal{K}_i}. 
\end{equation}
Here $f_i$ are polynomials, $1_A$ is the indicator function of set $A$, and $\cal{K}_i$ are $N$ disjoint sets in $\cal{G}$ defined by
\begin{equation}
\label{eq:K}
\cal{K}_i:=\left\{x\in\cal{G}: p^i_j(x)=0, \quad j=1,\dots,J_i,\quad q^i_k(x)\geq0, \quad  k=1,\dots,K_i \right \}.
\end{equation}
This type of extension was first discussed in~\cite{Lasserre2002}, and has been considered in \cite{Lasserre2009,Lasserre2006,Eriksson2011}. Many stationary averages of interest can be written in the above form. For instance, if $f_i:=1, \forall i$, then $\pi(f) = \pi(\cup_i\cal{K}_i)$ is the probability that event $\cup_i\cal{K}_i$ occurs.
\subsubsection{Extension of  the algorithm.} We follow here similar arguments to those presented in~\cite{Lasserre2009,Lasserre2006,Eriksson2011}. 
Let $\pi$ be a stationary measure of~\eqref{eq:sde} with support in $\mathcal{G}$ and with moments of order $d$. 
Let $\pi_i$ be the restriction of 
%
$\pi$ to set $\mathcal{K}_i$, and let $\pi_0$ denote the restriction to $\mathcal{K}^c:=\mathcal{G}\backslash\cup_i\mathcal{K}_i$, i.e., to the complement in $\cal{G}$ of the union of all $\mathcal{K}_i$s. Clearly,
$$\pi=\sum_{i=0}^N\pi_i.$$
Let $y^0,y^1,\dots,y^N\in\rrd$ denote the $(N+1)$ vectors of moments of $\pi_0,\pi_1,\dots,\pi_N$, respectively. 
From the definition~\eqref{eq:piece} it follows that
$$\pi(f)=\sum_{i=1}^N \pi_i \left(f_i \, 1_{\mathcal{K}_i}\right)=
\sum_{i=1}^N\pi_i(f_i)=\sum_{i=1}^N\iprod{y^i}{\pmb{f_i}},$$
assuming that $d_{f_1},\dots d_{f_N}\leq d$. The normalisation condition takes the form:
\begin{equation}
\label{eq:norm_disjoint}
\sum_{i=0}^Ny^i_0=1.
\end{equation}
Similarly to~\eqref{eq:eqs}, the proof of Lemma~\ref{diff} tells us that the stationarity of $\pi$ implies that 
\begin{equation}
\label{eq:eqs_disjoint}
\iprod{\sum_{i=0}^Ny^i}{\pmb{\cal{A}x^\alpha}}=0,\quad \forall \alpha\in\nn_{d-d_\mathcal{A}}.
\end{equation}
The vectors of moments $y^i\in\rrd$ also fulfil similar conditions to~\eqref{eq:t2} and~\eqref{eq:psd},
in particular:
\begin{equation}
\label{eq:t2_disjoint}
\iprod{y^i}{\pmb{g_jx^\alpha}}=0,\quad \forall \alpha\in \nn_{d-d_{g_j}},\quad \forall j=1,\dots,\ell, \quad \forall i=0, \ldots, N,
\end{equation}
\begin{equation}
\label{eq:t1_disjoint}
M_{s(d)}(y^i) \succeq0, \quad \forall i=0, \ldots, N.
\end{equation}

Furthermore, the definition~\eqref{eq:K} of the sets $\cal{K}_i$ leads to two additional sets of conditions. 
Firstly, the vectors of moments $y^i \in \rrd$ fulfil  
%
\begin{equation}
\label{eq:t22_disjoint}
\iprod{y^i}{\pmb{p_j^ix^\alpha}}=0,\qquad \forall\alpha\in \nn_{d-d_{p^i_j}},\qquad\forall j=1,\dots,J_i, \quad \forall i=1, \ldots, N.
\end{equation}  
Secondly, using similar arguments as those leading to~\eqref{eq:psd}, we obtain additional moment conditions. Let $q$ be a polynomial that is non-negative on $\cal{K}_i$ and $p$ is any polynomial of degree $s(d-d_q)$. Then we have
$$q(x)(p(x))^2=\iprod{(q(x) \,m_{s(d-d_q)}(x)\otimes m_{s(d-d_q)}(x))\pmb{p}}{\pmb{p}}.$$
Since $\pi_i$ has support in $\cal{K}_i$
\begin{equation}
\label{eq:locapsd}
0\leq \pi_i(q \,p^2)=\iprod{\pi_i(q \, m_{s(d-d_q)}\otimes m_{s(d-d_q)})\pmb{p}}{\pmb{p}}
=\iprod{M_{s(d-d_q)}(\theta_qy^i) \, \pmb{p}}{\pmb{p}}
\end{equation}
where $\theta_q:\rrd\to\r^{r(d-d_q)}$ is the shift operator
$(\theta_q y)_\alpha=\sum_{\beta\in\nn_{d_q}}\pmb{q}_\beta y_{\alpha+\beta},$  and
the \emph{localising matrix} 
$$M_{s(d-d_q)}(\theta_qy^i):=\pi_i(q \, m_{s(d-d_q)}\otimes m_{s(d-d_q)})\in\r^{s(d-d_q)\times s(d-d_q)}$$ 
is the element-wise integration of the matrix $q\,m_{s(d-d_q)}\otimes m_{s(d-d_q)}$. Since \eqref{eq:locapsd} holds for all $p\in\pol_{s(d-d_q)}$, the localising matrix $M_{s(d-d_q)}(\theta_qy)$ is positive semidefinite. 
From the definition~\eqref{eq:K}, $q^i_k$ is nonnegative on $\cal{K}_i$, hence we obtain our additional set of moment conditions:
%
\begin{equation}
\label{eq:loca_disjoint}
M_{s(d-d_{q_k^i})}(\theta_{q_k^i}y^i)\succeq 0,\qquad\forall k=1,\dots,K_i, \quad \forall i=1, \ldots, N.
\end{equation}
Together,~\eqref{eq:norm_disjoint}
--\eqref{eq:t22_disjoint} and~\eqref{eq:loca_disjoint} provide computationally tractable necessary conditions satisfied by the moments of any measure with support on $\mathcal{K}_i$. Thus we have the following outer approximation of $B^d_{f,\cal{G}}$:
%
\begin{equation}\label{eq:Cd_disjoint}
C^d_{f,\cal{G}}:= \left\{
\sum_{i=1}^N \iprod{y^i}{\pmb{f_i}}:y^0, \dots,y^N\in \rrd 
\mathrel{}\middle|\mathrel{}
\eqref{eq:norm_disjoint}, \eqref{eq:eqs_disjoint}, \eqref{eq:t2_disjoint}, \eqref{eq:t1_disjoint},
\eqref{eq:t22_disjoint}, \eqref{eq:loca_disjoint} \text{ are fulfilled}
 \right \}.
\end{equation}
The extended algorithm obtains bounds on $B^d_{f,\cal{G}}\subseteq C^d_{f,\cal{G}}$ by finding the infimum and supremum of $C^d_{f,\cal{G}}$; that is, by solving two SDPs with $(N+1)$ times as many variables as in our original algorithm.
\subsection*{Remark (Support of $\pi_0$).} Notice that $C^d_{f,\cal{G}}$ does not include conditions on $y^0$ related to the support of $\pi_0$. The reason why is that, in general, $\mathcal{K}^c$ has no simple description of the form \eqref{eq:K}. If such a description exists, extra constraints can easily be appended. 
If such constraints are lacking, there is an unfortunate consequence: $C^d_{f,\cal{G}}$ always contains the origin, since 
$(y^0,y^1,\dots,y^N):=(y,0,\dots,0)$ satisfies all the constraints in~\eqref{eq:Cd_disjoint}. 
Consequently, our extended algorithm only yields informative bounds 
if $f$ is nonnegative (or nonpositive) and, in this case, only the upper (resp. lower) bound will be informative. 
However, for certain purposes this may be sufficient, as the following example demonstrates.
\begin{example} 
\label{exp:reliability}
We consider the analysis of a noisy nonlinear oscillator in relation to reliability problems of structural mechanics. Structures (e.g., buildings, bridges) perturbed by random forces (e.g. waves, earthquakes) are often modelled with the stationary response of a non-linear oscillator driven by Gaussian white noise~\cite{Dunne1997,Schueller1997}. A typical such model used in the literature is the Duffing oscillator
\begin{equation}
\label{eq:osci}
\ddot{Y}_t+\dot{Y}_t+Y_t+\frac{1}{2}Y_t^3=\sqrt{2}dW_t,
\end{equation}
where $Y_t \in \r$ describes the deviation of the structure from its standing point at time $t$, and $W:=\{W_t:t\geq0\}$ is a standard Brownian motion. 
Structural reliability examines whether the structure will bend past a critical value 
of deviation, and how often this event should be expected to occur \cite{Schueller1997}. 
First passage times (i.e., the amount of time it takes for the structure to bend past the critical deviation) 
and extreme-value probabilities (i.e., how likely the building is to bend past the critical deviation in a given interval of time) 
are typically investigated. Passage times feature more prominently in the structural mechanics literature since they are more amenable to analysis~\cite{Dunne1997}. Here, we use our extended algorithm to find \emph{upper bounds} on extreme-value probabilities and on the average fraction of time the structure spends bent beyond the critical deviation.

It is well known~\cite[\S 3]{Mattingly2002} that \eqref{eq:osci} has a unique stationary measure 
\begin{equation}\label{eq:dufden}\pi(dy\times d\dot{y})=e^{- \left(\frac{\dot{y}^2}{2}+\frac{y^2}{2}+\frac{y^4}{8}\right)}dy\times d\dot{y},\end{equation}
where $dy\times d\dot{y}$ denotes the Lebesgue measure in $\r^2$. Throughout this section we assume that the process is at stationarity, i.e., 
$(Y_t,\dot{Y}_t)\sim\pi,$ $\forall t\geq0$.
We begin by re-writing \eqref{eq:osci} in It\^o form
$$dX(t):=\begin{bmatrix}dX_1(t)\\ dX_2(t)\end{bmatrix}=\begin{bmatrix}X_1(t)\\ -X_2(t)-X_1(t)-\frac{1}{2}X^3_1(t)\end{bmatrix}dt+\begin{bmatrix}0\\ \sqrt{2}\end{bmatrix}dW_t,$$
where $X_1:=Y$ and $X_2:=\dot{Y}$. By Birkhoff's Ergodic Theorem \cite[\S 3]{Bellet2006}, the average fraction of time that the building spends bent beyond the critical deviation $u$ converges to 
\begin{equation}
\label{eq:Fu}
F_u:=\pi(1_{\{x\in\r^2:x_1\geq u\}}).
\end{equation}
The extreme-value probability is defined as
\begin{equation}
\label{eq:ext_val_prob}
P_u:=\Pb\left(\left\{\sup_{s\in[0,T]}X_1(s)>u\right\}\right),
\end{equation}
where $[0,T]$ is a given time interval of interest. For sufficiently high $u$, it is shown in \cite{Leadbetter1983} 
that the up-crossing events become independent; hence, the number of up-crossings in the interval $[0,T]$ has a Poisson distribution with mean $v_u \, T$, where $v_u$ is the mean threshold crossing rate of $u$. For sufficiently regular stationary processes, the mean up-crossing rate can be obtained from Rice's formula~\cite{Naess1984,Soong1973}
\begin{equation}
\label{eq:upc}
v_u:=\pi \left(x_2 \, 1_{\{x\in\r^2:x_1=u,x_2\geq0\}} \right),
\end{equation}
whence it follows that 
\begin{equation}
\label{eq:pois}
P_u\approx 1-e^{-v_uT}.
\end{equation}
In the computations below, we characterise this regime and consider crossings over high deviations $u$ (at least three times larger than the standard deviation of $\pi$); hence we assume that \eqref{eq:pois} holds exactly. From \eqref{eq:Fu} and \eqref{eq:upc}, it is clear that our extended algorithm can provide upper bounds on $v_u$ and $F_u$.  These bounds are then used with equation \eqref{eq:pois} to bound $P_u$.
%
%

The standard deviation of $\pi$ is $\sigma \approx 0.761$, as computed directly from~\eqref{eq:dufden} or 
using our unmodified algorithm. 
Following~\cite{Dunne1997}, we considered a time interval $T=100$ and critical deviations $u$ varying from $3 \sigma$ to 
$5 \sigma$. The largest SDP successfully solved by SDPT3 was $d=14$, and we computed $14$ bounds in a CPU time of $403$ seconds, averaging $29$ seconds per bound. 
The upper bounds computed with our algorithm are shown with the exact values computed using~\eqref{eq:dufden} in Tables \ref{tab3} and \ref{tab4}. Although the upper bounds are orders of magnitude greater than the true $P_u$ and $F_u$, the bounds could be useful for practical purposes. For instance, our bounds state that the probability of the structure bending further than five standard deviations at any point over the interval of time $[0,100]$ is less than $0.011\%$, and that the structure will spend less than $2$ millionth's of a percent of that interval bent beyond this deviation. 

This example was chosen so that exact values of $F_u$ and $P_u$ could be computed directly from \eqref{eq:dufden},
so as to evaluate the quality of the bounds. For most oscillator models, no such analytical expressions are available, and it is often not even clear how many stationary measures exist. Our method applies equally to these other oscillator models.
\begin{table}[!ht]
\begin{center}
\resizebox{\textwidth}{!}{
\begin{tabular}{c|ccccccc}
$u/\sigma$&$3$&$3\tfrac{1}{3}$&$3\tfrac{2}{3}$&$4$&$4\tfrac{1}{3}$&$4\tfrac{2}{3}$&$5$\\\hline
Upper bound& $6.326\times10^{-2}$   & $1.673\times10^{-2}$   &    $4.898\times10^{-3}$  &$1.622\times10^{-3}$  &$5.905\times10^{-4}$    & $2.469\times10^{-4}$   & $1.059\times10^{-4}$ \\
Exact value& $4.581\times10^{-2}$   & $4.280\times10^{-3}$   &    $1.974\times10^{-4}$  &$4.015\times10^{-6}$  &$3.126\times10^{-8}$    & $7.978\times10^{-11}$   & $5.639\times10^{-14}$ 
\end{tabular}}
\caption{Computed upper bounds on $P_u$ and exact values from~\eqref{eq:pois}.}\vspace{-20pt}
\label{tab3}
\end{center}
\end{table}
\begin{table}[!ht]
\begin{center}
\resizebox{\textwidth}{!}{
\begin{tabular}{c|ccccccc}
$u/\sigma$&$3$&$3\tfrac{1}{3}$&$3\tfrac{2}{3}$&$4$&$4\tfrac{1}{3}$&$4\tfrac{2}{3}$&$5$\\\hline
Upper bound& $4.804\times10^{-4}$   & $3.272\times10^{-5}$   &    $3.781\times10^{-6}$  &$8.814\times10^{-7}$  &$1.799\times10^{-7}$    & $5.903\times10^{-8}$   & $1.7954\times10^{-8}$ \\
Exact value & $1.280\times10^{-4}$   & $9.267\times10^{-6}$   &    $3.409\times10^{-7}$  &$5.601\times10^{-9}$  &$3.560\times10^{-9}$    & $7.493\times10^{-14}$   & $4.413\times10^{-17}$ 
\end{tabular}}
\caption{Computed upper bounds on $F_u$ and exact values from~\eqref{eq:ext_val_prob}.}\vspace{-20pt}
\label{tab4}
\end{center}
\end{table}
\end{example}
\section{Concluding remarks.}\label{disc} In this paper, we have introduced an algorithm based on semidefinite programming that yields upper and lower bounds on stationary averages of SDEs with polynomial drift and diffusion coefficients. The motivation behind our work is the study of long-term behaviour of such SDEs. As explained in the introduction, additional work is required to link the bounds obtained by our algorithm with this long-term behaviour. Typically, a drift condition must be verified by finding an appropriate Lyapunov function~\cite{Meyn1993b}. For polynomial drift vectors and diffusion matrices, one can also employ semidefinite programming to search for these Lyapunov functions (see, \emph{sum of squares} programming approaches~\cite{Parrilo2000,Parrilo2003}). 
In many respects, these approaches are dual to the method we describe in this paper, see \cite{Laurent2009,Lasserre2009,Glynn2008,Kashima2011a,Kashima2013} for more on the connections. 

Our algorithm is also applicable to SDEs whose diffusion coefficients $\sigma$ are not polynomial, but whose diffusion matrix $a:=\sigma\sigma^T$ is polynomial (e.g., the Cox-Ingersoll-Ross interest rate model in financial mathematics). 
We have concentrated on polynomial diffusion coefficients for simplicity, in order to guarantee uniqueness of solutions. Furthermore, our algorithm can be extended to SDEs with \emph{rational} drift vector and diffusion matrix---one must just carefully choose polynomials $h$ such that $\cal{A}h$ is still a polynomial.

Our choice of moment constraints was motivated by the convenience of use of the modelling package GloptiPoly 3. However, there is a wide selection of moment conditions, some of which lead to easier conic programs (e.g., linear programs or second-order cone programs)~\cite{Lasserre2009,Lasserre2004}. 
We also restricted ourselves to stationary measures with supports contained in algebraic varieties. We did this to simplify the exposition and because the applications chosen did not require more generality. However, from Section \ref{sec43} it is clear that a similar algorithm can be constructed for measures with supports in so called 
\emph{basic semialgebraic sets} of the form~\eqref{eq:K}~\cite{Lasserre2009}. 
Such an approach could be advantageous for Example~\ref{x3}---using Stroock Varadhan's Support Theorem it is not difficult to deduce that any stationary measure of those SDEs must have support on the nonnegative semiaxis $[0,\infty)$.

Lastly, a practical issue relating to numerical aspects of our algorithm, and of GMAs in general.  
In contrast with other approaches, our algorithm runs in polynomial time---its computational complexity follows from that of primal-dual interior point algorithms~\cite{Labit2002}. However, in our experience, the applicability of the algorithm is affected by certain numerical issues. Specifically, the SDPs that arise from moment problems can be badly conditioned, causing the solvers to perform badly for medium to large problems. We believe that this is a consequence of using raw moments as the basis of the space of moment vectors, as is done in most GMAs. Indeed, the order of magnitude of the moments of a distribution varies rapidly (e.g., see~\eqref{eq:invagammon} in Example \ref{GBM}). 
Since the feasible set $\mathcal{O}^d_\mathcal{G}$ over which we optimise contains such a vector of moments and these sets are often compact~\cite{Lasserre2009}, we expect large discrepancies in the order of magnitude of the entries of our feasible points $y$ and of the moment matrix $M_d(y)$. This can lead to a bad condition number of $M_d(y)$ and consequently to poor performance of the solver. Improvements could be achieved by using an orthonormal basis with respect to the measures of interest, but this is not easy in practice; not only do we usually have little \textit{a priori} information about the measures to guide our choice of basis, but the necessary modifications of the algorithmic implementation are substantial and the package GloptiPoly 3 could not be used in its current form. In our experience, a simple way to mitigate the bad conditioning is to scale the entries of the vectors $y$ by $\tilde{y}_\alpha:=y_\alpha/z^\alpha$, where $z \in \rn_+$. This is equivalent to scaling by the moments of a Dirac measure at $z$, so $z$ should be chosen so that the entry $z_i$ is close to the absolute value of the $i^{th}$ component of the mean of the measure of interest. A similar scaling was employed in \cite{Helmes2001}. It is then straightforward to show that $\tilde{y}$ satisfies the same semidefinite inequalities as $y$, and all that is left to do is to rewrite the equality constraints in terms of the rescaled variables $\tilde{y}$.
\subsection*{Acknowledgements:} We thank Nikolas Kantas and Philipp Thomas for many helpful discussions. 
%
\bibliographystyle{siamplain} 
\footnotesize
\bibliography{library}

\begin{thebibliography}{10}

\bibitem{Ariaratnam1992}
{\sc S.~T. Ariaratnam and W.-C. Xie}, {\em {Lyapunov Exponents and Stochastic
  Stability of Coupled Linear Systems Under Real Noise Excitation}}, Journal of
  Applied Mechanics, 59 (1992), pp.~664--673,
  \href{http://dx.doi.org/10.1115/1.2893775}{doi:\nolinkurl{10.1115/1.2893775}}.

\bibitem{Bellet2006}
{\sc L.~R. Bellet}, {\em {Ergodic properties of Markov processes}}, Open
  Quantum Systems II,  (2006).

\bibitem{Carlin1992}
{\sc B.~P. Carlin, N.~G. Polson, and D.~S. Stoffer}, {\em {A Monte Carlo
  Approach to Nonnormal and Nonlinear State-Space Modeling}}, Journal of the
  American Statistical Association, 87 (1992), pp.~493--500.

\bibitem{Dawson1980}
{\sc D.~A. Dawson}, {\em {An infinite geostochastic system}}, in Multivariate
  Analysis V, P.~R. Krishnaiah, ed., Amsterdam, 1980, North-Holland Publishing
  Company.

\bibitem{Dawson1980a}
{\sc D.~A. Dawson}, {\em {Qualitative behavior of geostochastic systems}},
  Stochastic Processes and their Applications, 10 (1980), pp.~1--31,
  \href{http://dx.doi.org/10.1016/0304-4149(80)90002-2}{doi:\nolinkurl{10.1016/0304-4149(80)90002-2}}.

\bibitem{Dunne1997}
{\sc J.~Dunne and M.~Ghanbari}, {\em {Extreme-value prediction for non-linear
  sotchastic oscillators via numerical solutions of the stationary FPK
  Equation}}, Journal of Sound and Vibration, 206 (1997), pp.~697--724,
  \href{http://dx.doi.org/10.1006/jsvi.1997.1148}{doi:\nolinkurl{10.1006/jsvi.1997.1148}}.

\bibitem{Eriksson2011}
{\sc B.~Eriksson and M.~Pistorius}, {\em {Method of moments approach to pricing
  double barrier contracts in polynomial jump-diffusion models}}, International
  Journal of Theoretical and Applied Finance, 14 (2011), pp.~1139--1158,
  \href{http://dx.doi.org/10.1142/S0219024911006644}{doi:\nolinkurl{10.1142/S0219024911006644}}.

\bibitem{Glynn2008}
{\sc P.~W. Glynn and A.~Zeevi}, {\em {Bounding Stationary Expectations of
  Markov Processes}}, in Markov Processes and Related Topics: A Festschrift for
  Thomas G. Kurtz, E.~N. Stewart, J.~Feng, and R.~H. Stockbridge, eds., vol.~4,
  Institute of Mathematical Statistics, 2008, pp.~195--214,
  \href{http://dx.doi.org/10.1214/074921708000000381}{doi:\nolinkurl{10.1214/074921708000000381}}.

\bibitem{Gordon1993}
{\sc N.~J. Gordon, D.~J. Salmond, and A.~F.~M. Smith}, {\em {Novel approach to
  nonlinear/non-Gaussian Bayesian state estimation}}, IEE Proceedings F - Radar
  and Signal Processing, 140 (1993), p.~107,
  \href{http://dx.doi.org/10.1049/ip-f-2.1993.0015}{doi:\nolinkurl{10.1049/ip-f-2.1993.0015}}.

\bibitem{Helmes2001}
{\sc K.~Helmes, S.~R{\"{o}}hl, and R.~H. Stockbridge}, {\em {Computing Moments
  of the Exit Time Distribution for Markov Processes by Linear Programming}},
  Operations Research, 49 (2001), pp.~516--530,
  \href{http://dx.doi.org/10.1287/opre.49.4.516.11221}{doi:\nolinkurl{10.1287/opre.49.4.516.11221}}.

\bibitem{Helmes2000}
{\sc K.~Helmes and R.~H. Stockbridge}, {\em {Numerical comparison of controls
  and verification of optimality for stochastic control problems}}, Journal of
  Optimization Theory and Applications, 106 (2000), pp.~107--127,
  \href{http://dx.doi.org/10.1023/A:1004659107996}{doi:\nolinkurl{10.1023/A:1004659107996}}.

\bibitem{Helmes2003}
{\sc K.~Helmes and R.~H. Stockbridge}, {\em {Extension Of Dale's moment
  conditions with application to the Wright-Fisher model}}, Stochastic Models,
  19 (2003), pp.~255--267,
  \href{http://dx.doi.org/10.1081/STM-120020389}{doi:\nolinkurl{10.1081/STM-120020389}}.

\bibitem{Henrion2009}
{\sc D.~Henrion, J.-B. Lasserre, and J.~L{\"{o}}fberg}, {\em {GloptiPoly 3:
  moments, optimization and semidefinite programming}}, Optimization Methods
  and Software, 24 (2009), pp.~761--779,
  \href{http://dx.doi.org/10.1080/10556780802699201}{doi:\nolinkurl{10.1080/10556780802699201}}.

\bibitem{Hernandez-Lerma1999}
{\sc O.~Hern{\'{a}}ndez-Lerma and J.~B. Lasserre}, {\em {Further Topics on
  Discrete-Time Markov Control Processes}}, Springer New York, New York, NY,
  1999,
  \href{http://dx.doi.org/10.1007/978-1-4612-0561-6}{doi:\nolinkurl{10.1007/978-1-4612-0561-6}}.

\bibitem{Hernandez-Lerma2003}
{\sc O.~Hern{\'{a}}ndez-Lerma and J.~B. Lasserre}, {\em {Markov Chains and
  Invariant Probabilities}}, Birkh{\"{a}}user Basel, Basel, 2003,
  \href{http://dx.doi.org/10.1007/978-3-0348-8024-4}{doi:\nolinkurl{10.1007/978-3-0348-8024-4}}.

\bibitem{Hormander1967}
{\sc L.~H{\"{o}}rmander}, {\em {Hypoelliptic second order differential
  equations}}, Acta Mathematica, 119 (1967), pp.~147--171,
  \href{http://dx.doi.org/10.1007/BF02392081}{doi:\nolinkurl{10.1007/BF02392081}}.

\bibitem{Kallenberg2001}
{\sc O.~Kallenberg}, {\em {Foundations of Modern Probability}},
  Springer-Verlag, New York Berlin Heidelberg, second~ed., 2001.

\bibitem{Kashima2011a}
{\sc K.~Kashima and R.~Kawai}, {\em {An optimization approach to weak
  approximation of stochastic differential equations with jumps}}, Applied
  Numerical Mathematics, 61 (2011), pp.~641--650,
  \href{http://dx.doi.org/10.1016/j.apnum.2010.10.012}{doi:\nolinkurl{10.1016/j.apnum.2010.10.012}}.

\bibitem{Kashima2013}
{\sc K.~Kashima and R.~Kawai}, {\em {On Weak Approximation of Stochastic
  Differential Equations through Hard Bounds by Mathematical Programming}},
  SIAM Journal on Scientific Computing, 35 (2013), pp.~A1--A21,
  \href{http://dx.doi.org/10.1137/110841497}{doi:\nolinkurl{10.1137/110841497}}.

\bibitem{Khasminskii2012}
{\sc R.~Khasminskii}, {\em {Stochastic Stability of Differential Equations}},
  vol.~66 of Stochastic Modelling and Applied Probability, Springer Berlin
  Heidelberg, Berlin, Heidelberg, second~ed., 2012,
  \href{http://dx.doi.org/10.1007/978-3-642-23280-0}{doi:\nolinkurl{10.1007/978-3-642-23280-0}}.

\bibitem{Khasminskii1967}
{\sc R.~Z. Khas'minskii}, {\em {Necessary and Sufficient Conditions for the
  Asymptotic Stability of Linear Stochastic Systems}}, Theory of Probability
  {\&} Its Applications, 12 (1967), pp.~144--147,
  \href{http://dx.doi.org/10.1137/1112019}{doi:\nolinkurl{10.1137/1112019}}.

\bibitem{Kloeden2011}
{\sc P.~E. Kloeden and E.~Platen}, {\em {Numerical Solution of Stochastic
  Differential Equations}}, Springer-Verlag, Berlin Heidelberg New York,
  4th~ed., 2011.

\bibitem{Labit2002}
{\sc Y.~Labit, D.~Peaucelle, and D.~Henrion}, {\em {SEDUMI INTERFACE 1.02: a
  tool for solving LMI problems with SEDUMI}}, in Proceedings. IEEE
  International Symposium on Computer Aided Control System Design, IEEE, 2002,
  pp.~272--277,
  \href{http://dx.doi.org/10.1109/CACSD.2002.1036966}{doi:\nolinkurl{10.1109/CACSD.2002.1036966}}.

\bibitem{Lasserre2002}
{\sc J.-B. Lasserre}, {\em {Bounds on measures satisfying moment conditions}},
  Annals of Applied Probability, 12 (2002), pp.~1114--1137,
  \href{http://dx.doi.org/10.1214/aoap/1031863183}{doi:\nolinkurl{10.1214/aoap/1031863183}}.

\bibitem{Lasserre2009}
{\sc J.~B. Lasserre}, {\em {Moments, Positive Polynomials and Their
  Applications}}, Imperial College Press Optimization Series, Imperial College
  Press, London, 2009,
  \href{http://dx.doi.org/10.1142/p665}{doi:\nolinkurl{10.1142/p665}}.

\bibitem{Lasserre2004}
{\sc J.-B. Lasserre and T.~Prieto-Rumeau}, {\em {SDP vs. LP relaxations for the
  moment approach in some performance evaluation problems}}, Stochastic Models,
  20 (2004), pp.~439--456,
  \href{http://dx.doi.org/10.1081/STM-200033112}{doi:\nolinkurl{10.1081/STM-200033112}}.

\bibitem{Lasserre2006}
{\sc J.-B. Lasserre, T.~Prieto-Rumeau, and M.~Zervos}, {\em {Pricing a class of
  exotic options via moments and SDP relaxations}}, Mathematical Finance, 16
  (2006), pp.~469--494,
  \href{http://dx.doi.org/10.1111/j.1467-9965.2006.00279.x}{doi:\nolinkurl{10.1111/j.1467-9965.2006.00279.x}}.

\bibitem{Laurent2009}
{\sc M.~Laurent}, {\em {Sums of Squares, Moment Matrices and Optimization Over
  Polynomials}}, in Emerging applications of algebraic geometry, M.~Putinar and
  S.~Sullivant, eds., 2009, pp.~157--270,
  \href{http://dx.doi.org/10.1007/978-0-387-09686-5_7}{doi:\nolinkurl{10.1007/978-0-387-09686-5_7}}.

\bibitem{Leadbetter1983}
{\sc M.~R. Leadbetter, G.~Lindgren, and H.~Rootz{\'{e}}n}, {\em {Extremes and
  Related Properties of Random Sequences and Processes}}, Springer Series in
  Statistics, Springer New York, New York, NY, 1983,
  \href{http://dx.doi.org/10.1007/978-1-4612-5449-2}{doi:\nolinkurl{10.1007/978-1-4612-5449-2}}.

\bibitem{Leibowitz1963}
{\sc M.~Leibowitz}, {\em {Statistical Behavior of Linear Systems with Randomly
  Varying Parameters}}, Journal of Mathematical Physics, 4 (1963), p.~852,
  \href{http://dx.doi.org/10.1063/1.1724328}{doi:\nolinkurl{10.1063/1.1724328}}.

\bibitem{Mattingly2002}
{\sc J.~Mattingly, A.~Stuart, and D.~Higham}, {\em {Ergodicity for SDEs and
  approximations: locally Lipschitz vector fields and degenerate noise}},
  Stochastic Processes and their Applications, 101 (2002), pp.~185--232,
  \href{http://dx.doi.org/10.1016/S0304-4149(02)00150-3}{doi:\nolinkurl{10.1016/S0304-4149(02)00150-3}}.

\bibitem{Meyn1993a}
{\sc S.~P. Meyn and R.~L. Tweedie}, {\em {Stability of Markovian Processes II:
  Continuous-Time Processes and Sampled Chains}}, Advances in Applied
  Probability, 25 (1993), pp.~487--517,
  \href{http://dx.doi.org/10.2307/1427521}{doi:\nolinkurl{10.2307/1427521}}.

\bibitem{Meyn1993b}
{\sc S.~P. Meyn and R.~L. Tweedie}, {\em {Stability of Markovian Processes III:
  Foster-Lyapunov Criteria for Continuous-Time Processes}}, Advances in Applied
  Probability, 25 (1993), pp.~518--548,
  \href{http://dx.doi.org/10.2307/1427522}{doi:\nolinkurl{10.2307/1427522}}.

\bibitem{Naess1984}
{\sc A.~Naess}, {\em {Technical Note: On a rational approach to extreme value
  analysis}}, Applied Ocean Research, 6 (1984), pp.~173--174,
  \href{http://dx.doi.org/10.1016/0141-1187(84)90007-5}{doi:\nolinkurl{10.1016/0141-1187(84)90007-5}}.

\bibitem{Parrilo2000}
{\sc P.~A. Parrilo}, {\em {Structured semidefinite programs and semialgebraic
  geometry methods in robustness and optimization}}, PhD thesis, California
  Institute for Technology, 2000.

\bibitem{Parrilo2003}
{\sc P.~A. Parrilo}, {\em {Semidefinite programming relaxations for
  semialgebraic problems}}, Mathematical Programming, 96 (2003), pp.~293--320,
  \href{http://dx.doi.org/10.1007/s10107-003-0387-5}{doi:\nolinkurl{10.1007/s10107-003-0387-5}}.

\bibitem{Roberts1996}
{\sc G.~O. Roberts and R.~L. Tweedie}, {\em {Exponential Convergence of
  Langevin Distributions and Their Discrete Approximations}}, Bernoulli, 2
  (1996), pp.~341--363,
  \href{http://dx.doi.org/10.2307/3318418}{doi:\nolinkurl{10.2307/3318418}}.

\bibitem{Rogers2000b}
{\sc L.~C.~G. Rogers and D.~Williams}, {\em {Diffusions, Markov Processes and
  Martingales: Volume 2. It{\^{o}} Calculus}}, Cambridge University Press,
  Cambridge, second~ed., jun 2000.

\bibitem{Schueller1997}
{\sc G.~I. Schu{\"{e}}ller}, {\em {A state-of-the-art report on computational
  stochastic mechanics}}, Probabilistic Engineering Mechanics, 12 (1997),
  pp.~197--321,
  \href{http://dx.doi.org/10.1016/S0266-8920(97)00003-9}{doi:\nolinkurl{10.1016/S0266-8920(97)00003-9}}.

\bibitem{Schwerer1996}
{\sc E.~Schwerer}, {\em {A linear programming approach to the steady-state
  analysis of Markov processes, PhD Thesis}}, tech. report, Graduate School of
  Business, Stanford University, 1996.

\bibitem{Soong1973}
{\sc T.~Soong}, {\em {Random differential equations in science and
  engineering}}, Academic Press, 1973.

\bibitem{Stuart2010}
{\sc A.~M. Stuart}, {\em {Inverse problems: A Bayesian perspective}}, Acta
  Numerica, 19 (2010), pp.~451--559,
  \href{http://dx.doi.org/10.1017/S0962492910000061}{doi:\nolinkurl{10.1017/S0962492910000061}}.

\bibitem{Talay1990}
{\sc D.~Talay}, {\em {Second order discretization schemes of stochastic
  differential systems for the computation of the invariant law}}, Stochastics
  and Stochastic Reports, 29 (1990), pp.~13--36.

\bibitem{Talay1991}
{\sc D.~Talay}, {\em {Approximation of upper Lyapunov exponents of bilinear
  stochastic differential systems}}, SIAM Journal on Numerical Analysis, 28
  (1991), pp.~1141--1164.

\bibitem{Talay2002}
{\sc D.~Talay}, {\em {Stochastic Hamiltonian dissipative systems: exponential
  convergence to the invariant measure, and discretization by the implicit
  Euler scheme}}, Markov Processes and Related Fields, 8 (2002), pp.~163--198.

\bibitem{Toh1999}
{\sc K.-C. Toh, M.~Todd, and R.~T{\"{u}}t{\"{u}}nc{\"{u}}}, {\em {SDPT3 — A
  Matlab software package for semidefinite programming, Version 1.3}},
  Optimization Methods and Software, 11 (1999), pp.~545--581.

\bibitem{Watanabe1989}
{\sc S.~Watanabe and N.~Ikeda}, {\em {Stochastic differential equations and
  diffusion processes}}, Kodansha Scientific Books, Tokyo, 2nd editio~ed.,
  1989.

\bibitem{Weiqiu2002}
{\sc Z.~Weiqiu and C.~Guoqiang}, {\em {Nonlinear stochastic dynamics: A survey
  of recent developments}}, Acta Mechanica Sinica, 18 (2002), pp.~551--566,
  \href{http://dx.doi.org/10.1007/BF02487958}{doi:\nolinkurl{10.1007/BF02487958}}.

\end{thebibliography}
\normalsize
\appendix\section{Proof of Lemma \ref{diff}}\label{appendix1}
\begin{proof}[Lemma \ref{diff}] Let $X$ be a solution of \eqref{eq:sde} whose initial condition has law $\pi$. Suppose that $D^h$ (defined in \eqref{eq:mart}) is a martingale and not just a local martingale. Then
$$\Ebb{D^h_t}=\Ebb{\Ebb{\left.D^h_t\right|\cal{F}_0}}=\Ebb{D^h_0}=0,$$
for any $t>0$. Since $\pi$ is a stationary measure of \eqref{eq:sde}, $\Ebb{h(X_t)}=\Ebb{h(X_0)}=\pi(h)$ and so the above implies 
$$\Ebb{\int_0^t\cal{A}h(X_s)ds}=0.$$
Using Tonelli's Theorem and stationarity we have
$$\Ebb{\int_0^t|\cal{A}h(X_s)|ds}=\int_0^t\Ebb{|\cal{A}h(X_s)|}ds=\int_0^t\pi(|\cal{A}h|)ds=t\pi(|\cal{A}h|).$$
Since $h$ is of degree less or equal than $d-\max\{d_{b_i},d_{a_{ij}}\}$, our assumptions on $\pi$ imply that $\pi(|\cal{A}h|)<\infty$. Thus, we have that $\cal{A}h(X_s)$ is integrable with respect to $\Pb\times \lambda_t$, where $\lambda_t$ denotes the Lebesgue measure on $[0,t]$. Choosing any $t>0$ and applying Fubini's Theorem we obtain
$$t\pi(\cal{A}h)=\int_0^t\Ebb{\cal{A}h(X_s)}ds=\Ebb{\int_0^t\cal{A}h(X_s)ds}=0$$
from which it follows that $\pi(\cal{A}h)=0$.

We now need to argue that $D^h$ is indeed a martingale. To show that $D^{h}$ is a martingale it suffices to show that $\Ebb{\left. D^h_t\right| \cal{F}_s}= D^h_s$ for all $t\geq s$. Equivalently that $\Ebb{1_AD^h_t}=\Ebb{1_AD^h_s}$ for any $A\in\cal{F}_s$ and $t\geq s$. We do this by finding a sequence of martingales $\{D^{h_m}:m\in\z\}$ such that for every $t\geq0$, $\{D^{h_m}_t:m\in\z\}$ is dominated by a $\Pb$-integrable random variable and such that $\{D^{h_m}_t:m\in\z\}$ converges almost surely to $D^h_t$. With such a sequence at hand, we can use dominated convergence and the martingale property of $D^{h_m}$ to establish the desired result:
$$\Ebb{1_A D^{h}_t}=\lim_{m\to\infty}\Ebb{1_A D^{h_m}_t}=\lim_{m\to\infty}\Ebb{1_A D^{h_m}_s}=\Ebb{1_A D^{h}_s}.$$
Thus, all that remains is to construct the sequence $\{D^{h_m}:m\in\z\}$. We do so by using the fact that if $g\in C^2(\rn)$ is compactly supported, then $D^g$ is a martingale, see \cite[\S V]{Rogers2000b}. For any natural number $m$ let
$$h_m(x):=\phi(x_1/m)\phi(x_2/m)\dots \phi(x_n/m)h(x)$$
where $\phi$ is the smooth compactly supported function
$$\phi(y):=\left\{\begin{array}{c l}\exp\left(-\frac{y^2}{1-y^2}\right)&\text{ if }|y|<1\\0&\text{ otherwise}\end{array}\right. .$$
By definition of $h_m$, we have that $D^{h_m}_t$ tends almost surely to $D^h_t$ and so we just need to show that $D^{h_m}_t$ is dominated by a $\Pb$-integrable random variable. 

Since $\partial_z\phi(z/m)|_{z=y}=\partial_x\phi(x)/m|_{x=y/m}$, and $\partial_{zz}\phi(z/m)|_{z=y}=\partial_{xx}\phi(x)/m^2 |_{x=y/m}$, and $\phi,\partial\phi,$ and $\partial^2\phi$ are all bounded (since they are continuous functions non-zero only on the compact set $[-1,1]$) we have that
$$|\cal{A} h_m|\leq\beta\left(\sum_{i=1}^n (|hb_i|+|\partial_i hb_i|+\sum_{i,j=1}^n |ha_{ij}|+|\partial_i ha_{ij}|+|\partial_j ha_{ij}| +|\partial_{ij} ha_{ij}| \right)$$
where $\beta$ is a constant that depends on the maximums of $\phi,\partial\phi,$ and $\partial^2\phi$. Since all the polynomials in the right-hand side are of degree $d$ or less, the right-hand side is $\pi$-integrable. Thus, the sequence of random variables $\{D_t^{h_m}\}_{m\in\z_+}$ is dominated by
$$|h(X_t)|+|h(X_0)|+\beta\int_0^t\sum_{i=1}^n (|h(X_s)b_i(X_s)|+|\partial_i h(X_s)b_i(X_s)|+\sum_{i,j=1}^n |h(X_s)a_{ij}(X_s)|+|\partial_i h(X_s)a_{ij}(X_s)|$$
$$+|\partial_j h(X_s)a_{ij}(X_s)| +|\partial_{ij}h(X_s) a_{ij}(X_s)|ds.$$
Using Tonelli's Theorem, and stationarity of $X$ as we did before, we have that the above is $\Pb$-integrable and the lemma follows.
\end{proof}
\section{Proof of Theorem \ref{convl}}\label{appendix2}
\begin{proof}[Theorem \ref{convl}] As explained in the main text, all we have to do is carry out Steps 1 and 2.

\textbf{Step 1:} Equation \eqref{eq:leconv} tells us that $\cal{S}\subseteq B_{Q,\mathbb{S}^{n-1}}$. Conversely, suppose that $\pi$ is a stationary measure of \eqref{eq:Lam} with support contained in $\mathbb{S}^{n-1}$ and choose an initial condition $Z$ with law $\pi$. Since $\Lambda^Z(0)=Z/\norm{Z}=Z$, sationarity of $\pi$ and the Markov property of $\Lambda^Z$ implies that $\Lambda^Z$ is a stationary process with one-dimensional law $\pi$. Thus 
$$\lambda(Z)=\lim_{t\to\infty}\frac{1}{t}\int_0^tQ(\Lambda^Z(s))ds=\pi(Q),\qquad \Pb\text{-almost surely},$$
where the first equality follows from \eqref{eq:Gam} and \eqref{eq:MLLN}, and the second is a consequence of Birkhoff's Ergodic Theorem (for instance, see  \cite[\S 3]{Bellet2006}). So $B_{Q,\mathbb{S}^{n-1}}\subseteq \cal{S}$ and we have the desired $\cal{S}=B_{Q,\mathbb{S}^{n-1}}$.

\textbf{Step 2:} Suppose that $\pi$ is a Borel probability measure with support contained in $\mathbb{S}^{n-1}$ that satisfies \eqref{eq:eqs2}. We have to argue that $\pi$ is a stationary measure of \eqref{eq:Lam}, that is that if $Z$ has law $\pi$, then $\Pbb{\Lambda_t^Z\in A}=\pi(A)$ for each $t\geq0$ and each Borel measurable set $A\subseteq \mathbb{S}^{n-1}$. By approximating indicator functions with smooth functions, it is enough to argue that $\Ebb{h(\Lambda_t^Z)}=\pi(h)$ for each $t\geq0$ and each smooth function $h:\rn\to\r$. 

We first show that $\pi(\cal{A}h)=0$ for all smooth functions $h$. By linearity, \eqref{eq:eqs2} implies that $\pi(\cal{A}p)=0$ for all polynomials $p$. Using Weierstrass' Approximation Theorem it is straightforward to argue that for any smooth $h$ and $\varepsilon>0$ there exists a polynomial $p$ such that
\begin{equation}\label{eq:approx}\max_{x\in\mathbb{S}^{n-1}}\left(\mmag{h(x)-p(x)}+\sum_{i=1}^n\mmag{\partial_i h(x)-\partial_i p(x)}+\sum_{i,j=1}^n\mmag{\partial_{ij} h(x)-\partial_{ij} p(x)}\right)\leq \varepsilon.\end{equation} 
see Lemma \ref{approx2} below. Using the above we have that 
$$\mmag{\cal{A}h(x)-\cal{A}p(x)}\leq\varepsilon\left(\max_{x\in\mathbb{S}^{n-1}}\left(\sum_{i=1}^n\mmag{b_i(x)}+\sum_{i,j=1}^n\mmag{a_{ij}(x)}\right)\right),\qquad\forall x\in\mathbb{S}^{n-1}.$$
%
%
%
Since $\pi(\cal{A}p)=0$, $\pi$ has finite mass, and $\varepsilon$ was arbitrary, it follows that $\pi(\cal{A}h)=0$.

Let us return to arguing that $\Ebb{h(\Lambda^Z_t)}=\pi(h)$, where $Z$ is any initial condition with law $\pi$. Since the drift vector and diffusion coefficients of \eqref{eq:linsde} are linear, we can find a modification of $\{X^x_t:t\geq0,x\in\rn\}$ such that for every $t\geq0$, the map $\rn\ni x\mapsto X^x_t\in\rn$ is smooth, $\Pb$-almost surely, see Proposition 2.2 in \cite[\S5]{Watanabe1989}. Consequently, for every $t\geq0$, the map $x\mapsto \Lambda^x_t$ is also smooth, $\Pb$-almost surely. Now choose any smooth function $h:\rn\to\r$. It\^o's formula tells us that
$$h(\Lambda_t^Z)=h(\Lambda_0^Z)+\int_0^t(\cal{A}h)(\Lambda_s^Z)ds+\int_0^t\iprod{U(\Lambda_s^Z)\nabla h(\Lambda_s^Z)}{dW_s},\qquad \Pb\text{-almost surely},$$
where $U$ is the matrix with columns $u_1,\dots,u_m$.
By definition, the paths of $\Lambda^Z$ are contained in a compact set, thus applying the Dominated Convergence Theorem we have that 
\begin{equation}\label{eq:mart2}M_t:=\int_0^t\iprod{U(\Lambda_s^Z)\nabla h(\Lambda_s^Z)}{dW_s}\end{equation}
is not just a local martingale, but a martingale. Thus
$$\Ebb{h(\Lambda_t^Z)}=\pi(h)+\Ebb{\int_0^t(\cal{A}h)(\Lambda_s^Z)ds}.$$
Because for any $x\in\mathbb{S}^{n-1}$ the paths of $\Lambda^x$ are contained in $\mathbb{S}^{n-1}$, and because both the drift vector and the diffusion matrix are continuous functions, the map $[0,\infty)\times\mathbb{S}^{n-1}\ni (t,x)\mapsto(\cal{A}h)(\Lambda_t^x)\in \r$ is bounded. Thus we can apply Fubini's Theorem  to obtain that
$$\Ebb{\int_0^t(\cal{A}h)(\Lambda_s^Z)ds}=\int_0^t\Ebb{(\cal{A}h)(\Lambda_s^Z)}ds.$$
So it is clearly enough to argue that $\Ebb{(\cal{A}h)(\Lambda_t^Z)}=0$ for each $t\geq0$. Deconvolving the expectation we have that
$$\Ebb{(\cal{A}h)(\Lambda_t^Z)}=\int\Ebb{(\cal{A}h)(\Lambda_t^x)}\pi(dx).$$
Let $u_t(x):=\Ebb{h(\Lambda_t^x)}$. Since $\mathbb{S}^{n-1}\ni x\mapsto h(\Lambda_t^x)\in\r$ is bounded, we can differentiate under the expectation sign. Thus, for each $t\geq0$, smoothness of $h$ and of $x\mapsto\Lambda_t^x$ implies that $x\mapsto u_t(x)$ is a smooth function too. If we can show that $\Ebb{(\cal{A}h)(\Lambda_t^x)}=(\cal{A}u_t)(x)$ for each $t\geq0$, then we are done since $\pi(\cal{A}u_t)=0$. Arguing this fact is routine, see Lemma \ref{FP} below.
\end{proof}
We find it convenient in the proof of the following lemma to write $\partial_\alpha f$ as a shorthand for
$$\frac{\partial^{\alpha_1}\partial^{\alpha_2}\dots\partial^{\alpha_n}f}{\partial x_1^{\alpha_1}\partial x_2^{\alpha_2}\dots\partial x_n^{\alpha_n}}$$
where $\alpha$ is any tuple in $\nn$.
\begin{lemma}\label{approx2}Suppose that $\cal{M}$ is a compact smooth embedded submanifold of $\rn$, that $h:\cal{G}\to\r$ is smooth and choose $\varepsilon>0$. Then, there exists a polynomial $p:\rn\to\r$ such that \eqref{eq:approx} holds (with $\mathbb{S}^{n-1}$ replaced by $\mathcal{M}$).
\end{lemma}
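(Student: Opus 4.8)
The plan is to prove a statement that is essentially independent of the manifold structure of $\cal{M}$. Since $\cal{M}$ is compact it is contained in some closed ball $\bar{B}_R\subset\rn$, and the quantity appearing in \eqref{eq:approx} is bounded above by the same expression with $\max_{x\in\cal{M}}$ replaced by $\max_{x\in\bar{B}_R}$. Thus it suffices to approximate the smooth function $h$ in the $C^2$-norm on $\bar{B}_R$ by a polynomial. (In our application $h$ is defined and smooth on all of $\rn$; were $h$ given only on $\cal{M}$, one would first extend it to a smooth function on $\rn$ via the tubular-neighbourhood or Whitney extension theorem, the ambient derivatives in \eqref{eq:approx} then being those of the extension.) In other words, the content of the lemma is the multivariate $C^2$ Weierstrass approximation theorem; one could simply cite it, but I will instead sketch a constructive proof.

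First I would reduce to a compactly supported function and then mollify. Pick a cutoff $\chi\in\cinf(\rn)$ with $\chi\equiv1$ on a neighbourhood $V\supseteq\bar{B}_R$ and with compact support, and set $g:=\chi h\in\cinf(\rn)$. Since $g$ agrees with $h$ together with all its partial derivatives on $V\supseteq\bar{B}_R$, it is enough to approximate $g$ in $C^2(\bar{B}_R)$. Let $G_t(x):=(2\pi t)^{-n/2}e^{-\norm{x}^2/(2t)}$ and put $g_t:=g\ast G_t$. Because $g$ has compact support, each partial satisfies $\partial_\alpha g_t=(\partial_\alpha g)\ast G_t$, and as $t\downarrow0$ one has $\partial_\alpha g_t\to\partial_\alpha g$ uniformly on $\rn$ for every $\mmag{\alpha}\leq2$, by the standard approximate-identity estimate using the uniform continuity of the compactly supported $\partial_\alpha g$. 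Hence I can fix $t>0$ so that $g_t$ differs from $g$, and the corresponding first- and second-order ambient derivatives differ, by less than $\varepsilon/2$ on $\bar{B}_R$.

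The final step, which I expect to be the only delicate point, is to replace $g_t$ by a polynomial $p$. The key observation is that, because $g$ is compactly supported, $g_t(x)=\int g(y)G_t(x-y)\,dy$ extends to an entire function of $x\in\bb{C}^n$; consequently its Taylor series about the origin converges to $g_t$ uniformly on compact subsets of $\rn$, and by the standard fact that the partial sums of the power series of an entire function converge together with all of their derivatives on compacta, the derivatives of the partial sums converge to those of $g_t$. I would therefore take $p:=P_N$, the $N$-th Taylor partial sum of $g_t$, and choose $N$ large enough that $p$ is within $\varepsilon/2$ of $g_t$ in $C^2(\bar{B}_R)$; the triangle inequality then yields \eqref{eq:approx}. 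The justification of the entirety of $g_t$ and of the simultaneous convergence of the derivatives of the Taylor partial sums is the part requiring care, and it can be carried out either by the complex-analytic argument just sketched or, avoiding complex analysis altogether, by Taylor-expanding the Gaussian kernel $G_t(x-y)$ in the variable $x$ and integrating the resulting polynomial truncation against $g(y)\,dy$ over the compact support of $g$, which directly produces a polynomial in $x$ approximating $g_t$ in $C^2$.
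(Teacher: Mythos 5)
Your proof is correct, but it takes a genuinely different route from the paper's. Both arguments begin identically: extend $h$ to a compactly supported smooth function on $\rn$ (the ambient partial derivatives in \eqref{eq:approx} only make sense for such an extension, a point you rightly flag) and reduce to $C^2$-approximation on a compact set. From there the paper uses an ``integrate up'' trick: it applies the plain $C^0$ Weierstrass theorem once, to the top mixed derivative $\partial_{\mathbf{2}}\tilde h$ with $\mathbf{2}=(2,\dots,2)$, and then recovers all lower-order derivatives by composing with iterated antiderivative operators $G_{\mathbf{2}-\alpha}$, which are bounded on the relevant hypercube; the error in each $\partial_\alpha$ is thus controlled by $(2l)^{|\mathbf{2}-\alpha|}\varepsilon$. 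You instead mollify with the Gaussian heat kernel, use that convolution commutes with $\partial_\alpha$ to get uniform $C^2$ convergence as $t\downarrow 0$, and then produce the polynomial by truncating the (entire) function $g_t$ --- equivalently, by truncating the Taylor expansion of the Gaussian kernel in $x$ and integrating against $g(y)\,dy$ over the compact support. The paper's argument buys economy: it needs only the classical $C^0$ Weierstrass theorem as a black box plus elementary calculus (fundamental theorem of calculus and the Leibniz integral rule). Yours is the classical heat-kernel proof of Weierstrass upgraded to derivatives; it is self-contained, and it delivers $C^k$ approximation for every $k$ simultaneously with no additional work, at the cost of the one genuinely delicate step you identify --- justifying that the Taylor partial sums of the entire function $g_t$ converge together with their derivatives on compacta (or, in the real-variable version, that the truncated kernel expansion converges in $C^2$ uniformly over the compact support of $g$). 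That step is standard and your sketch of it is sound, so the proposal stands as a valid alternative proof.
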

\begin{proof}
Let $\tilde{h}$ be any compactly supported smooth extension of $h$ on $\rn$. Choose $l$ such that the support of $\tilde{h}$ is contained in the hypercube $[-l,l]^n$. For any real-valued continuous function $f$ on $\rn$, define
$$G_if(x):=\int_{-l}^{x_i}f(x_1,\dots,x_{i-1},y,x_{i+1},\dots,x_n)dy,\qquad i=1,\dots,n$$
and
$$G_\alpha f(x):=\underbrace{G_1G_1\dots G_1}_{\alpha_1\text{ times}}\underbrace{G_2G_2\dots G_2}_{\alpha_2\text{ times}}\dots\underbrace{G_nG_n\dots G_n}_{\alpha_n\text{ times}}f(x),\qquad \alpha\in\nn.$$
Now choose any $\varepsilon>0$. By the Weierstrass Approximation Theorem there exists a polynomial $p:\rn\to\r$ such that
$$\sup_{x\in\cal{M}}\mmag{\partial_{\mathbf{2}}\tilde h(x)-p(x)}\leq \varepsilon,$$
where $\mathbf{2}:=(2,2,\dots,2)$ the tuple in $\nn$ whose entries are all $2$. Let $q(x):=G_\mathbf{2}p(x)$ and choose any $\alpha\in\nn_2$. Since $\partial_\alpha\tilde{h}$ has support in $[-l,l]^n$, applying repeatedly the fundamental theorem of calculus we have that $\partial_\alpha \tilde{h}=G_{\mathbf{2}-\alpha}\partial_{\mathbf{2}}\tilde{h}$. Applying repeatedly Leibniz integral rule we then have that
$$\partial_\alpha \tilde{h}-\partial_\alpha q = G_{\mathbf{2}-\alpha}\partial_{\mathbf{2}-\alpha}\partial_\alpha \tilde{h} -\partial_\alpha G_\mathbf{2}p=G_{\mathbf{2}-\alpha}(\partial_\mathbf{2}\tilde{h}-p)$$
Since for any continuous function $f$ and $i=1,\dots,n$, it is the case that $\mmag{G_if}\leq G_i\mmag{f}$, we have that
$$\sup_{x\in\cal{M}}\mmag{\partial_\alpha h(x)-\partial_\alpha q(x)}=\sup_{x\in\cal{M}}\mmag{\partial_\alpha \tilde{h}(x)-\partial_\alpha q(x)}\leq \sup_{x\in\cal{M}}\left(G_{\mathbf{2}-\alpha}\mmag{\partial_\mathbf{2}\tilde{h}-p}(x)\right)$$
$$\leq G_{\mathbf{2}-\alpha}\left(\sup_{x\in\cal{M}}\mmag{\partial_\mathbf{2}\tilde{h}(x)-p(x)}\right).$$
From the definition of $p$ we have that the righthand side of the above is less or equal than $(2l)^{|\mathbf{2}-\alpha|}\varepsilon$. Since the $\varepsilon$ was arbitrary, \eqref{eq:approx} follows.
\end{proof}
\begin{lemma}\label{FP} Under the conditions of Theorem \ref{convl} we have that, for every $t\geq0$ and $x\in\rn$
$$\Ebb{(\cal{A}h)(\Lambda_t^x)}=(\cal{A}u_t)(x)$$
\end{lemma}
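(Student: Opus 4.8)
The plan is to recognise the claim as the statement that the generator commutes with the transition semigroup $P_th(x):=\Ebb{h(\Lambda_t^x)}=u_t(x)$, i.e. $P_t(\cal{A}h)=\cal{A}(P_th)$, and to prove it by identifying \emph{both} sides with the time derivative $\partial_t u_t(x)$. Throughout I would exploit that the paths of $\Lambda^x$ lie in the compact sphere $\mathbb{S}^{n-1}$: every smooth function, together with its first and second derivatives, is bounded along the paths, and this is exactly what turns the local martingales coming from It\^o's formula into genuine martingales and legitimises the interchanges of expectation, time-integration and differentiation used below. I would also take for granted the smoothness of $x\mapsto u_t(x)$ already established in the proof of Theorem~\ref{convl} (by differentiating under the expectation sign), since this is what makes $\cal{A}u_t$ well defined in the first place.

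First I would apply It\^o's formula to $h$ and $\Lambda^x$, exactly as in the proof of Theorem~\ref{convl},
$$h(\Lambda_t^x)=h(x)+\int_0^t(\cal{A}h)(\Lambda_s^x)\,ds+\int_0^t\iprod{U(\Lambda_s^x)\nabla h(\Lambda_s^x)}{dW_s},$$
where $U$ has columns $u_1,\dots,u_m$. Since $\Lambda^x$ stays on the compact sphere and $h$ is smooth, the integrand of the stochastic integral is bounded, so it is a true martingale; taking expectations kills it and yields
$$u_t(x)=h(x)+\int_0^t\Ebb{(\cal{A}h)(\Lambda_s^x)}\,ds.$$
The map $s\mapsto\Ebb{(\cal{A}h)(\Lambda_s^x)}$ is continuous (boundedness plus dominated convergence and path-continuity), so $u_t(x)$ is differentiable in $t$ with $\partial_t u_t(x)=\Ebb{(\cal{A}h)(\Lambda_t^x)}$. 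This identifies the left-hand side of the lemma with $\partial_t u_t(x)$.

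Next I would invoke the Markov (Chapman--Kolmogorov) property of $\Lambda^x$ in the form $u_{t+s}(x)=\Ebb{u_t(\Lambda_s^x)}$ and apply It\^o's formula to the \emph{smooth} function $u_t$ along $\Lambda^x$. By the same compactness argument the resulting stochastic integral is a martingale, so taking expectations gives
$$u_{t+s}(x)=u_t(x)+\int_0^s\Ebb{(\cal{A}u_t)(\Lambda_r^x)}\,dr.$$
Differentiating in $s$ at $s=0$, using continuity of the integrand at $r=0$ and $\Lambda_0^x=x$, produces $\partial_t u_t(x)=(\cal{A}u_t)(x)$. Comparing with the previous step gives $\Ebb{(\cal{A}h)(\Lambda_t^x)}=\partial_t u_t(x)=(\cal{A}u_t)(x)$, which is the claim.

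The computation is otherwise routine, and the only points needing genuine care --- hence the main obstacles --- are the two regularity issues that compactness of $\mathbb{S}^{n-1}$ resolves: upgrading the It\^o stochastic integrals from local to true martingales, and justifying the differentiation of the time-integrals through continuity of their integrands (via boundedness and dominated convergence). The one nontrivial ingredient, the $C^2$-smoothness of $u_t$ in $x$, has already been secured in the proof of Theorem~\ref{convl} and may simply be cited here.
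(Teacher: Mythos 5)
Your proposal is correct and follows essentially the same route as the paper: both arguments identify $\Ebb{(\cal{A}h)(\Lambda_t^x)}$ and $(\cal{A}u_t)(x)$ with the same time derivative of $u$ (the paper phrases it as $\lim_{s\to0}s^{-1}\bigl(\Ebb{u_t(\Lambda_s^x)}-u_t(x)\bigr)$, computed twice via It\^o's formula, the Markov property, and compactness of $\mathbb{S}^{n-1}$ to upgrade local martingales to martingales). The only cosmetic difference is that for the second limit you invoke continuity of $r\mapsto\Ebb{(\cal{A}u_t)(\Lambda_r^x)}$ at $r=0$, whereas the paper bounds the remainder $\Ebb{\int_0^s(\cal{A}u_t)(\Lambda_v^x)-(\cal{A}u_t)(\Lambda_0^x)\,dv}$ by $O(s^2)$ with one more application of It\^o's rule; both justifications are valid.
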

\begin{proof}Choose any $t,s\geq0$ and $x\in\rn$. For typographical condition we write we use $\Lambda^x_t$ and $\Lambda(t,x)$ interchangeably in this proof. If $\mu_s^x$ denotes the law of $\Lambda_s^x$, by definition of $u_t$ we have that
$$\Ebb{u_t(\Lambda_s^x)}=\int\Ebb{h(\Lambda_t^y)}\mu_s^x(dy)=\Ebb{h(\Lambda(t,\Lambda(s,x))}.$$
By the Markov property of $\Lambda$ we have that 
$$\Ebb{h(\Lambda(t,\Lambda(s,x))}=\Ebb{h(\Lambda(t+s,x))}.$$
Applying It\^o's rule we have that
$$h(\Lambda(t+s,x))=h\left(\Lambda(t,x)\right)+\int_t^{t+s}(\cal{A}h)(\Lambda_v^x)dv+\int_t^{t+s}\iprod{U(\Lambda_v^Z)\nabla h(\Lambda_v^Z)}{dW_v}.$$
Since the paths of $\Lambda$ take values in a compact set the rightmost term above is a martingale (as a function of $s$). Taking expectations and applying Fubini's Theorem we have that
$$\Ebb{u_t(\Lambda_s^x)}=u_t(x)+\int_t^{t+s}\Ebb{(\cal{A}h)(\Lambda_v^x)}dv$$
Thus,
\begin{equation}\label{eq:limm}\lim_{s\to0}\frac{\Ebb{u_t(\Lambda_s^x)}-u_t(x)}{s}=\Ebb{(\cal{A}h)(\Lambda_t^x)}.\end{equation}
Next, applying It\^o's rule we have that
$$u_t(\Lambda_s^x)=u_t(x)+\int_0^s(\cal{A}u_t)(\Lambda_v^x)dv+\int_0^s\iprod{U(\Lambda_v^Z)\nabla u_t(\Lambda_v^Z)}{dW_v}.$$
Similarly, since the paths of $\Lambda$ take values in a compact set, the rightmost term is a martingale. Thus taking expectations of the above we have that
$$\Ebb{u_t(\Lambda_s^x)}-u_t(x)=\Ebb{\int_0^s(\cal{A}u_t)(\Lambda_v^x)dv}=s(\cal{A}u_t)(x)+\Ebb{\int_0^s(\cal{A}u_t)(\Lambda_v^x)-(\cal{A}u_t)(\Lambda_0^x)dv}.$$
Applying It\^o's rule, taking expectations, exploiting compactness of the paths of $\Lambda$, and applying Fubini's Theorem we have that
$$\mmag{\Ebb{\int_0^s(\cal{A}u_t)(\Lambda_v^x)-(\cal{A}u_t)(\Lambda_0^x)dv}}=\mmag{\Ebb{\int_0^s\int_0^v\cal{A}^2u_t(\Lambda_w^x)dw}}\leq s^2 \left(\max_{x\in\mathbb{S}^{n-1}}\mmag{(\cal{A}^2u_t)(x)}\right).$$
Consequently,
$$\lim_{s\to0}\frac{\Ebb{u_t(\Lambda_s^x)}-u_t(x)}{s}=\cal{A}u_t(x).$$
Comparing the above with \eqref{eq:limm} gives the desired result.
\end{proof}
\end{document}